\numberwithin{equation}{section}
\theoremstyle{plain}
\newtheorem{theorem}{Theorem}[section]
\newtheorem{lemma}[theorem]{Lemma}
\newtheorem{corollary}[theorem]{Corollary}
\theoremstyle{definition}
\newtheorem{definition}[theorem]{Definition}
\newtheorem{remark}[theorem]{Remark}
\begin{document}


\title[The finite embeddability property]{Semilinear substructural logics with the \\
finite embeddability property }

\author[S. M. Wang]{SanMin Wang}
\address{Faculty of Science\\Zhejiang Sci-Tech University\\ Hangzhou 310018\\ P.R. China}
\email{wangsanmin@hotmail.com}

\thanks{This research was supported by the National Foundation of Natural Sciences of China under Grant nos. 61379018 and 61662044 and 11571013}


\subjclass{03B47, 06F99, 03B50,  03B52}

\keywords{Finite embeddability property,  Residuated lattices,\\
Semilinear substructural logics,  Finite algebras,  Completeness}

\begin{abstract}
Three semilinear substructural logics ${\rm {\bf HpsUL}}_\omega ^\ast $, ${\rm {\bf UL}}_\omega $ and ${\rm {\bf IUL}}_\omega $ are constructed.
  Then the  completeness of  ${\rm {\bf UL}}_\omega $ and  ${\rm {\bf IUL}}_\omega$  with respect  to  classes of  finite \textbf{UL} and \textbf{IUL}-algebras, respectively, is  proved.   Algebraically,  non-integral ${\rm {\bf UL}}_\omega $ and ${\rm {\bf IUL}}_\omega $-algebras
   have the finite embeddability property, which  gives a characterization for finite \textbf{UL}
   and \textbf{IUL}-algebras.
\end{abstract}

\maketitle


\section{Introduction}\label{sec:intro}

The finite embeddability property (FEP),  or actually,   the finite model property (FMP), as shown in~\cite{W2}, fails for some known non-integral semilinear substructural logics including
Metcalfe and Montagna's uninorm
logic ${\rm {\bf UL}}$ and involutive uninorm logic ${\rm {\bf IUL}}$~\cite{MM}, and a
suitable extension ${\rm {\bf HpsUL}}^\ast $~\cite{W1} of Metcalfe, Olivetti and Gabbay's pseudo-uninorm logic ${\rm {\bf HpsUL}}$~\cite{MOG}. This shows that  ${\rm {\bf UL}}$, ${\rm {\bf IUL}}$ and ${\rm {\bf HpsUL}}^\ast $ are incomplete with respect to the corresponding classes of  finite algebras.

A natural problem is  whether we can construct logics which are complete with respect to finite ${\rm {\bf UL}}$, ${\rm{\bf IUL}}$ and ${\rm {\bf HpsUL}}^\ast $-algebras.  Algebraically, our motivation is how to characterize the variety generated by  its finite members when a class of algebras does not enjoy the FEP (or FMP).

 In this paper, we construct three schematic extensions ${\rm {\bf UL}}_\omega
$, ${\rm {\bf IUL}}_\omega $ and ${\rm {\bf HpsUL}}_\omega ^\ast $ by adding
one simple axiom
$$\mathrm{(FIN)}\,\,\, \vdash(\varphi\backslash e)\leftrightarrow ((\varphi \odot\varphi)\backslash e)$$
to ${\rm {\bf UL}}$, ${\rm {\bf IUL}}$ and ${\rm {\bf
HpsUL}}^\ast $,  respectively,  where $\varphi \leftrightarrow \psi$ is defined  to  be $ (\varphi \backslash \psi) \wedge (\psi \backslash \varphi)$.  Then we prove that  ${\rm {\bf UL}}_\omega $ and
${\rm {\bf IUL}}_\omega $  are complete with respect to classes of
 finite \textbf{UL} and \textbf{IUL}-algebras, respectively.  Algebraically,
 non-integral ${\rm {\bf UL}}_\omega $ and ${\rm {\bf IUL}}_\omega $-algebras
  have the finite embeddability property, which  gives a characterization
  for finite \textbf{UL} and \textbf{IUL}-algebras.

Classes of  ${\rm {\bf UL}}_\omega
$ and ${\rm {\bf IUL}}_\omega $-algebras are non-integral varieties which usually,
as pointed out in~\cite{H},  do not enjoy the FEP.
We prove the FEP for ${\rm {\bf UL}}_\omega $ and ${\rm {\bf
IUL}}_\omega $-algebras by Blok and Alten's construction~\cite{BA1,BA2}. But  in proving  the finiteness of Blok and Alten's construction in Lemma~\ref{lem:FMP3},  we have not  used Dickson¡¯s
lemma~\cite{BA1, CMM} or Higman¡¯s finite basis theorem~\cite{BA2, HH}  but used a specific property of  ${\rm {\bf UL}}_\omega $ and ${\rm {\bf IUL}}_\omega $-algebras which is given in Lemma~\ref{lem:p1}.

Since almost all proofs are done algebraically, as suggested by the referee, Hilbert-style systems for the logics under consideration have not been given. For details on \textbf{UL} and \textbf{IUL}, we refer to~\cite{MM}.  For  details on ${\rm {\bf HpsUL}}^\ast $, we refer to~\cite{MOG, W1}.
In addition,  we are unable to prove the FEP for ${\rm {\bf HpsUL}}_\omega ^\ast $ and left it as an open problem. In the paper,  $\mathbb{Z}_+ $ denote the set of positive integers,
$\mathbb{N}=\mathbb{Z}_+ \cup \{\,0\,\}$.

\section{${ \rm {\bf HpsUL}}^{\ast}_{\omega}$-algebras, ${\mathbf{UL}_{\omega}}$-algebras  and  ${\mathbf{IUL}_{\omega}}
$-algebras} \label{sec:aaa}

\begin{definition}~\cite{JT, MOG, TB}
 An ${\rm {\bf HpsUL}}$-algebra is a bounded semilinear
residuated lattice $\mathcal{A}=\langle A,\wedge,\vee,\cdot,\backslash,/, e,f,\bot,\top\rangle$
with universe $A$, binary operations $\wedge,\vee,\cdot,\backslash,/ $, and constants $e, f, \bot,\top $ such that:
\begin{enumerate}
\item[(i)] $\langle A,\wedge,\vee,\bot,\top\rangle$ is
a bounded lattice with top element $\top$ and
bottom element $\bot $;

\item[(ii)] $\langle A,\cdot,e\rangle$ is a monoid;

\item[(iii)] $\forall
x,y,z\in A,  x\cdot y\leqslant z$ iff $x\leqslant z/ y$
iff $y\leqslant x\backslash z$;

 \item[(iv)] $\forall x,y,u,v\in A,(\lambda_{u}((x\vee y)\backslash
x))\vee(\rho_{v}((x\vee y)\backslash y))=e$, where, for any $a,b\in A,$
 $\lambda_{a}(b)\coloneq(a \backslash
(b\cdot a) ) \wedge e, \rho_{a}(b)\coloneq((a\cdot b)/a) \wedge e.$
\end{enumerate}

\end{definition}

 We use the convention that $\cdot$ binds stronger than other binary operations and
 we shall often omit $\cdot$.  For example,  we will thus write $xy$ instead of $x\cdot y$.  We also define  $x^0=e $ and  $x^{n+1}= x^n $.

\begin{definition} \cite{MOG, MM, W1}
Let $\mathcal{A}=\langle A,\wedge,\vee,\cdot,\backslash ,/,
e, f, \bot,\top\rangle$ be an  \\
${\bf{HpsUL}}$-algebra. Then
\begin{enumerate}
\item[(i)]  ${\mathcal A}$ is an ${\bf{HpsUL}}$-chain if
it is linearly ordered.
\item[(ii)]    ${\mathcal A}$ is an ${\bf{HpsUL}}^{\ast}$-algebra  if the following weak commutativity (Wcm) holds for all $x, y \in A$:
$$xy\leq e \,\,\,\mathrm{implies}\,\,\,  yx\leq e.$$

\item[(iii)]   ${\mathcal A}$
is an  $\bf{UL} $-algebra if $xy=yx$ for all $x, y \in A$.

\item[(iv)]   ${\mathcal A}$
is an  $\bf{IUL} $-algebra if it is an  $\bf{UL} $-algebra such that  $\neg\neg x=x$ for all $x\in A$.
\item[(v)]   ${\mathcal A}$
is an   $\bf{HpsUL}^{\ast}_{\omega} $-algebra
(${\bf{UL}_{\omega}}$ or $
{\bf{IUL}_{\omega}}$-algebra) if it is an  $\bf{HpsUL}^{\ast} $-algebra ($\bf{UL}$ or $\bf{IUL}$-algebra) such
 that the following identity (Fin)
 $$x \backslash e=x^{2} \backslash e$$
holds for all $x\in A$.
\end{enumerate}
\end{definition}

\begin{theorem}  \cite{MOG, MM, TB}  \label{thm:P1}
Let ${\bf{L}}\in\{\,{\rm {\bf HpsUL}}^{\ast},{\mathbf{UL}},{\mathbf{IUL}},{\rm {\bf HpsUL}}^{\ast}_{\omega},{\mathbf{UL}_{\omega}},
{\mathbf{IUL}_{\omega}}\,\}$.  Then
\begin{enumerate}
\item[(i)] Each ${\rm {\bf L}}$-algebra
has a subdirect representation with ${\rm {\bf L}}$-chains;  \item[(ii)]
each finite ${\rm {\bf L}}$-algebra
has a subdirect representation with finitely many finite ${\rm {\bf L}}$-chains.
\end{enumerate}
\end{theorem}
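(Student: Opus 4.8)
\emph{Proof plan.} The plan is to reduce both parts to the known characterization of semilinearity for residuated lattices and then to check that each of the extra defining conditions descends to the chain factors. For part~(i), recall that condition~(iv) in the definition of an ${\rm{\bf HpsUL}}$-algebra is precisely the identity isolating the \emph{representable} residuated lattices: a residuated lattice satisfies~(iv) if and only if it is isomorphic to a subdirect product of residuated chains (see~\cite{JT, TB}). Hence, given any ${\rm{\bf HpsUL}}$-algebra $\mathcal{A}$, I would invoke this result to obtain a subdirect embedding $\mathcal{A}\hookrightarrow\prod_{i\in I}\mathcal{A}_i$ in which each factor $\mathcal{A}_i$ is a residuated chain and each projection $\pi_i\colon\mathcal{A}\to\mathcal{A}_i$ is a surjective homomorphism, so that $\mathcal{A}_i$ is a homomorphic image of $\mathcal{A}$. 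Each $\mathcal{A}_i$ is then automatically bounded (the images of $\top$ and $\bot$ are its greatest and least elements) and, being linearly ordered, satisfies~(iv) (as is readily checked on any chain); thus each $\mathcal{A}_i$ is already an ${\rm{\bf HpsUL}}$-chain.

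It remains to transfer to the factors the particular condition attached to each ${\rm{\bf L}}$. For ${\mathbf{UL}}$, ${\mathbf{IUL}}$, ${\rm{\bf HpsUL}}^\ast_\omega$, ${\mathbf{UL}_\omega}$ and ${\mathbf{IUL}_\omega}$ the extra conditions, namely $xy=yx$, $\neg\neg x=x$ and the identity (Fin) $x\backslash e=x^2\backslash e$, are all \emph{equations}, so they are preserved under the homomorphic images $\pi_i$ and each chain $\mathcal{A}_i$ inherits them, becoming a chain of the required kind. The one genuinely non-equational condition is the weak commutativity (Wcm), $xy\leq e$ implies $yx\leq e$, defining ${\rm{\bf HpsUL}}^\ast$, and I expect this to be the main obstacle, since quasi-equations need not survive passage to quotients. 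I would dispose of it by a residuation argument: suppose some factor $\mathcal{A}_i$ failed (Wcm), so that on this chain there are $a,b$ with $ab\leq e$ but $ba>e$; lift these to $x,y\in\mathcal{A}$ with $\pi_i(x)=a$ and $\pi_i(y)=b$, and set $\widehat{y}=y\wedge(x\backslash e)$. Then $x\widehat{y}\leq x(x\backslash e)\leq e$ holds in $\mathcal{A}$, so (Wcm) in $\mathcal{A}$ gives $\widehat{y}x\leq e$. Applying $\pi_i$, and using that $ab\leq e$ forces $b\leq a\backslash e$ so that $\pi_i(\widehat{y})=b\wedge(a\backslash e)=b$ in the chain, I obtain $ba=\pi_i(\widehat{y}x)\leq e$, contradicting $ba>e$. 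Hence every factor satisfies (Wcm) too, completing part~(i).

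Finally, for part~(ii), if $\mathcal{A}$ is finite then its congruence lattice is finite, so from the subdirect representation of part~(i) I can choose a finite family of congruences $\theta_1,\dots,\theta_n$ with $\bigcap_{k}\theta_k=\Delta$ (the identity congruence) whose quotients are chains; each $\mathcal{A}/\theta_k$ is a homomorphic image of the finite algebra $\mathcal{A}$, hence is itself a finite ${\rm{\bf L}}$-chain, which yields a subdirect representation of $\mathcal{A}$ with finitely many finite ${\rm{\bf L}}$-chains. The only point requiring care is the preservation of (Wcm) in the ${\rm{\bf HpsUL}}^\ast$ case, which is handled exactly as above.
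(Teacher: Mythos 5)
Your proposal is correct, but you should know that the paper contains no proof of this theorem at all: it is imported wholesale by citation from \cite{MOG, MM, TB}, so there is no in-paper argument to compare against. Your reconstruction follows the standard route of those references: condition (iv) in the definition of an ${\rm{\bf HpsUL}}$-algebra is exactly the Blount--Tsinakis semilinearity identity \cite{TB}, Birkhoff-style subdirect decomposition in that variety yields chain factors, boundedness and (iv) are automatic in the quotients (the bounds are constants of the signature, and any chain satisfies (iv) since one of $(x\vee y)\backslash x$, $(x\vee y)\backslash y$ is $\geqslant e$), and the conditions $xy=yx$, $\neg\neg x=x$ and (Fin) are equations, hence inherited. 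The one place where you add genuine content is the preservation of (Wcm), and your residuation argument with $\widehat{y}=y\wedge(x\backslash e)$ is sound; but it can be streamlined, because your computation never actually uses that $\mathcal{A}_i$ is a chain --- the step ``$ab\leqslant e$ forces $b\leqslant a\backslash e$'' is plain residuation, valid in any residuated lattice. What your trick really establishes is that (Wcm) is equivalent to the single identity $(x\backslash e)\cdot x\leqslant e$: indeed $x(x\backslash e)\leqslant e$ always holds, so (Wcm) yields the identity, and conversely $xy\leqslant e$ gives $y\leqslant x\backslash e$, whence $yx\leqslant (x\backslash e)x\leqslant e$. Thus ${\rm{\bf HpsUL}}^{\ast}$-algebras form a variety and (Wcm) passes to all homomorphic images for free, exactly like the other conditions, which also removes the ``care'' you flag in part (ii); the rest of part (ii) (finitely many congruences with trivial intersection extracted from the finite congruence lattice, finite chain quotients) is standard and correct.
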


\begin{lemma}\label{lem:p1}
Let $\mathcal{A}$ be an ${\rm {\bf HpsUL}}^\ast_{\omega}$-algebra.  Then
\begin{enumerate}
\item[(i)] $xy\leqslant e$ iff $xy^2\leqslant e$ for any $x,y\in A$;

\item[(ii)] $x_{1}^{k_{1}}\cdots x_{n}^{k_{n}}\leqslant e$ iff $x_{1}^{l_{1}}\cdots x_{n}^{l_{n}}\leqslant e$ for any  $x_1, \dots,  x_{n}\in A$, $k_{1}, \dots , k_{n}$, $l_{1}, \dots ,l_{n}\in \mathbb{Z}_+$.
\end{enumerate}
\end{lemma}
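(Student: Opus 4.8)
The plan is to derive (i) directly from the three available ingredients --- residuation, weak commutativity (Wcm), and the defining identity (Fin) --- and then bootstrap (i) up to arbitrary exponents to get (ii). For (i), I would read the inequality through the residual. Since (Wcm) holds for every pair, $xy \leqslant e$ is equivalent to $yx \leqslant e$, which by residuation means $x \leqslant y \backslash e$; now (Fin) gives $y \backslash e = y^{2} \backslash e$, so this is equivalent to $x \leqslant y^{2} \backslash e$, i.e.\ to $y^{2}x \leqslant e$, and one more use of (Wcm) rewrites it as $xy^{2} \leqslant e$. Running the chain in both directions yields $xy \leqslant e$ iff $xy^{2} \leqslant e$. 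The only subtlety is using (Wcm), stated as a one-way implication, as an equivalence, which is legitimate because it is asserted for all $x,y \in A$.

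The passage from (i) to (ii) rests on two observations. First, applying (Wcm) to the two-block factorisation $a = u_{1}$, $b = u_{2}\cdots u_{m}$ shows that the truth of $u_{1}u_{2}\cdots u_{m} \leqslant e$ is invariant under cyclic rotation of the factors. Second, combining this with (i) gives a single-occurrence doubling principle: for arbitrary (possibly empty) products $s,t$ and any $z \in A$, one has $szt \leqslant e$ iff $sz^{2}t \leqslant e$. Indeed, rotate $z$ to the end to obtain $tsz \leqslant e$, apply (i) with the block $ts$ playing the role of $x$ and $z$ that of $y$ to replace $z$ by $z^{2}$, and rotate back.

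Finally, I would use the doubling principle to adjust exponents one unit at a time. Writing $x_{i}^{k_{i}} = x_{i}^{k_{i}-1}x_{i}$ and doubling the last copy of $x_{i}$ raises its exponent from $k_{i}$ to $k_{i}+1$ while leaving every other factor untouched; since each step is an equivalence, the reverse move lowers an exponent, provided it stays in $\mathbb{Z}_+$. As all of $k_{1},\dots,k_{n}$ and $l_{1},\dots,l_{n}$ are positive, I can transform $(k_{1},\dots,k_{n})$ into $(l_{1},\dots,l_{n})$ coordinate by coordinate through such unit steps, each preserving $\leqslant e$, which is (ii). The point to get right is not any single computation but the bookkeeping that lets (i) --- an \emph{end-position} statement --- act on an \emph{arbitrary} interior factor; this is exactly what cyclic invariance supplies, and it is the step I would state most carefully.
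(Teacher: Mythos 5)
Your proof is correct and takes essentially the same route as the paper: part (i) is exactly the paper's chain of equivalences via (Wcm), residuation, (Fin), residuation, and (Wcm) again, while for part (ii) the paper merely states it is ``immediate from (i)''. Your cyclic-rotation-plus-unit-step-doubling bookkeeping is a correct and faithful expansion of that omitted step (and matches how the paper itself manipulates blocks with (Wcm) and Lemma~\ref{lem:p1} in the proof of Lemma~\ref{lem:p2}(iii)).
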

\begin{proof}
(i)  Let  $xy\leqslant e$ then $yx\leqslant e$ by (Wcm). Thus $x\leqslant y\backslash e$. Thus $x\leqslant y^{2}\backslash  e$ by (Fin). Hence $y^{2}x\leqslant e$. Therefore $x y^{2}\leqslant e$ by (Wcm).
The sufficiency part of (i) is proved in the same way.
(ii) is immediate from  (i).
\end{proof}

\begin{lemma}\label{lem:p2}
Let $\mathcal{A}$ be an ${\rm {\bf HpsUL}}^\ast_\omega $-chain. Then
\begin{enumerate}
\item[(i)] $st>u$  iff  $t>s\backslash u$ iff  $s>u/t$;
\item[(ii)] $su>tu$  implies $s>t$;
\item[(iii)] $stu=u$ implies $tu=u$.
\end{enumerate}
\end{lemma}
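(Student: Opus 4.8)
The plan is to establish Lemma~\ref{lem:p2} in three parts, working entirely inside a linearly ordered ${\rm {\bf HpsUL}}^\ast_\omega$-chain $\mathcal{A}$ and exploiting the fact that any two elements are comparable. For part (i), the key observation is that the residuation law (iii) of the definition, namely $st\leqslant u$ iff $t\leqslant s\backslash u$ iff $s\leqslant u/t$, is stated with non-strict inequalities; I would derive the strict version by contraposition. Specifically, assuming $st>u$ and arguing toward $t>s\backslash u$, I would suppose for contradiction that $t\leqslant s\backslash u$; then by residuation $st\leqslant u$, contradicting $st>u$. The reverse direction is symmetric: if $t>s\backslash u$ then I cannot have $st\leqslant u$, so $st>u$ since the chain is totally ordered. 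The analogous argument with $s$ and $u/t$ handles the other equivalence. The totality of the order is what lets me pass freely between failure of $\leqslant$ and validity of $>$.

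For part (ii), I would prove the contrapositive: assume $s\leqslant t$ and show $su\leqslant tu$, which then gives $su>tu$ implies $s>t$. Here I expect to use monotonicity of multiplication, which follows from the residuation property: if $s\leqslant t$, then since $tu\leqslant tu$ gives $t\leqslant tu/u$ by (iii), and $s\leqslant t\leqslant tu/u$ yields $su\leqslant tu$ by (iii) again. Hence $su\leqslant tu$, so $su>tu$ forces $s>t$ by totality. This is routine once monotonicity is in hand.

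Part (iii) is where I expect the real work, and it is the step most likely to require the special structure of $\omega$-algebras, in particular Lemma~\ref{lem:p1}. Assuming $stu=u$, I want to conclude $tu=u$. The natural approach is to compare $tu$ with $u$ using totality: either $tu\leqslant u$, $tu=u$, or $tu>u$, and I would aim to rule out the two strict possibilities. Suppose $tu>u$; then by a monotonicity or cancellation argument I would try to iterate $st$ acting on $u$, pushing $u=stu=s(tu)$ past $u$ and deriving a contradiction, likely invoking part (ii) to cancel. The symmetric case $tu<u$ should be handled similarly. The delicate point is that $st$ need not be $\leqslant e$ or $\geqslant e$ outright, so I anticipate needing to split on where $st$ sits relative to $e$ and to use Lemma~\ref{lem:p1}(i) to replace powers of elements and collapse the relation $stu=u$ into a statement about $tu$ alone. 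Controlling the interaction between the idempotent-like behavior forced by (Fin) and the cancellation from part (ii) is the main obstacle, and getting the contradiction to close in both strict cases is where the argument must be handled most carefully.
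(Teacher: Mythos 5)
Your parts (i) and (ii) are fine and match the paper, which dismisses both as clear. For (iii), however, your plan circles the right tools but leaves the decisive step unexecuted, and that step is the entire content of the paper's proof. The paper does not argue by contradiction on the trichotomy $tu<u$, $tu=u$, $tu>u$, and it never invokes part (ii): it splits on $st\leqslant e$ versus $st>e$ (your worry that ``$st$ need not be $\leqslant e$ or $\geqslant e$ outright'' dissolves immediately by totality of the chain, as you eventually concede) and then derives the two inequalities $tu\leqslant u$ and $u\leqslant tu$ directly. The missing idea is a conjugation argument showing that $st\leqslant e$ forces $tst\leqslant e$ and $sts\leqslant e$: from $st\leqslant e$, Lemma~\ref{lem:p1}(i) gives $st^{2}\leqslant e$, then (Wcm) gives $t^{2}s=t(ts)\leqslant e$, and (Wcm) again gives $(ts)t=tst\leqslant e$; symmetrically $ts\leqslant e$ (by (Wcm)), $ts^{2}\leqslant e$, $s^{2}t=s(st)\leqslant e$, and $(st)s=sts\leqslant e$. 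With these in hand, substituting $u=stu$ into itself yields $tu=tstu=(tst)u\leqslant u$ and $u=ststu=(sts)(tu)\leqslant tu$, hence $tu=u$; the case $st>e$ is dual, since both (Wcm) and Lemma~\ref{lem:p1}(i) contrapose to strict inequalities in a chain.

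It is worth seeing why your sketched route, as stated, would stall. Iterating $stu=u$ only produces $(st)^{n}u=u$, which stays inside the submonoid generated by the block $st$ and never isolates $t$; to say anything about $tu$ you must commute a single $t$ past $st$, and that is precisely what the (Wcm)-plus-(Fin) conjugation above accomplishes. Moreover, part (ii) cannot supply the cancellation you lean on: it is the one-directional implication $su>tu$ implies $s>t$, whose contrapositive is just monotonicity, so from $su\leqslant u=eu$ you cannot conclude $s\leqslant e$. At best (ii) gives $t>e$ from $tu>u$, which closes neither strict case. So the gap is genuine: you correctly anticipate the case split on $st$ versus $e$ and the relevance of Lemma~\ref{lem:p1}(i), but without the derivation of $tst,sts\leqslant e$ (respectively $>e$) your two ``strict possibilities'' cannot be ruled out, and the contradiction you hope for never materializes.
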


\begin{proof} (i)  and (ii) are clear. Only (iii) is proved as follows.  If $st\leqslant e$ then $tst\leqslant e$ and
$sts\leqslant e$ by Lemma~\ref{lem:p1} and (Wcm). Thus $tu=tstu\leqslant u$ and
$u=ststu\leqslant tu$. Hence $tu\leqslant u$ and $u\leqslant tu$.
Therefore $tu=u$. The case of $st>e$ is proved in the same
way.  Thus $tu=u$.
\end{proof}

\begin{lemma}\label{lem:c2a}
\begin{enumerate}
\item[(i)]  Each finite ${\rm {\bf HpsUL}}^\ast$-chain is an  ${\rm {\bf HpsUL}}_{\omega}^\ast$-chain;

\item[(ii)]   Each finite ${\rm {\bf HpsUL}}^\ast$-algebra is an  ${\rm {\bf HpsUL}}_{\omega}^\ast$-algebra.
 \end{enumerate}
\end{lemma}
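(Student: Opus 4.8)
The plan is to deduce (ii) from (i) and then concentrate all the work on (i). For (ii), let $\mathcal{A}$ be a finite ${\rm {\bf HpsUL}}^\ast$-algebra. By Theorem~\ref{thm:P1}(ii) it has a subdirect representation in a product $\prod_i\mathcal{C}_i$ of finitely many finite ${\rm {\bf HpsUL}}^\ast$-chains $\mathcal{C}_i$. Each $\mathcal{C}_i$ satisfies (Fin) by part (i), and since (Fin) is an equation it is inherited by direct products and by subalgebras; hence $\mathcal{A}$ satisfies (Fin) and is an ${\rm {\bf HpsUL}}^\ast_\omega$-algebra. So everything reduces to (i).

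For (i), let $\mathcal{A}$ be a finite ${\rm {\bf HpsUL}}^\ast$-chain and fix $x\in A$; I must show $x\backslash e=x^2\backslash e$. As $\mathcal{A}$ is a chain, $x$ is comparable with $e$, and I treat the case $x\geqslant e$, the case $x\leqslant e$ being dual (with $e/x$ in place of $x\backslash e$ and the decreasing powers of $x$). Since $x\geqslant e$ we have $x^{n+1}=x^n x\geqslant x^n$, so the powers of $x$ form a non-decreasing sequence in the finite chain $A$; hence there is $N\geqslant 1$ with $g:=x^N=x^{N+1}$, an idempotent satisfying $xg=gx=g\geqslant e$. Writing $b:=x\backslash e$, residuation gives $xb\leqslant e$, and antitonicity of $\backslash$ in its first argument gives $x^2\backslash e\leqslant x\backslash e$. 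For the reverse inequality, $x\backslash e\leqslant x^2\backslash e$ is by residuation equivalent to $x^2 b\leqslant e$, which is in turn equivalent to $xb\leqslant b$. Thus (Fin) for $x$ reduces to the single statement $x\cdot(x\backslash e)=x\backslash e$ (equivalently $x\backslash e=g\backslash e$, since $x\leqslant x^2\leqslant g$ then sandwiches the residuals).

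It remains to prove $xb\leqslant b$, and I expect this to be the main obstacle. I would argue by contradiction, assuming $xb>b$. The structural tool available at this stage is (Wcm): it gives $uv\leqslant e$ iff $vu\leqslant e$, and hence that $a_1\cdots a_k\leqslant e$ is invariant under cyclic permutation of the factors. Under the assumption $xb>b$, maximality of $b=x\backslash e$ forces $x(xb)>e$, so the non-decreasing orbit $b\leqslant xb\leqslant x^2b\leqslant\cdots$ crosses $e$ and stabilises at $gb$ with $x(gb)=gb>e$; moreover the immediate successor $\ell$ of $b$ in the finite chain is exactly the point across which $x\cdot(-)$ jumps above $e$. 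The aim is to combine $xb\leqslant e$, the cyclic rule, and associativity to evaluate the mixed products $x^k b$ and $b\,x^k$ near this jump and derive a contradiction with the maximality of $b$. The delicate point is that the adjunction between $x\cdot(-)$ and $x\backslash(-)$ alone is \emph{insufficient}: it is order-theoretically consistent with $xb>b$, so the argument must genuinely use associativity of the monoid together with the finiteness-driven collapse $x^N=x^{N+1}$. This is precisely the specific arithmetic of finite ${\rm {\bf HpsUL}}^\ast$-chains, and is the reason one can avoid Dickson's and Higman's lemmas, as emphasised around Lemma~\ref{lem:FMP3}.
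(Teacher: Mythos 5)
Your reductions are sound and agree with the paper's setup: (ii) follows from (i) via Theorem~\ref{thm:P1}(ii) because (Fin) is an identity and so passes to subalgebras of products; and for (i), your observations that $x^2\backslash e\leqslant x\backslash e$ is the easy direction when $x\geqslant e$, and that the remaining content is exactly $x(x\backslash e)\leqslant x\backslash e$, i.e.\ $x^2(x\backslash e)\leqslant e$, are correct. But at precisely this point --- what you yourself call ``the main obstacle'' --- the proposal stops proving and starts describing intentions (``the aim is to combine \dots and derive a contradiction''). No contradiction is actually derived from the ingredients you assemble ($b\leqslant xb\leqslant e<x^2b$, the idempotent $g=x^N$, the successor $\ell$ of $b$), and it is not evident how one would follow from them alone: the cyclic-permutation consequence of (Wcm) does not by itself interact with the maximality of $b$ in any way that forces $xb\leqslant b$. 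This missing step is a genuine gap, and it is the mathematical core of the statement.

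For comparison, here is what the paper does at that point. Supposing $z=x\backslash e$ satisfies $xz\leqslant e<x^2z$, it flips via (Wcm) to $zx\leqslant e<zx^2$, takes $n$ with $x^n=x^{n-1}$ (finiteness), and runs a \emph{downward induction} on powers: if $x^k=x^{k-1}$ with $k\geqslant 3$, then $zx^{k-1}\leqslant x^{k-2}\leqslant zx^{k}=zx^{k-1}$, hence $zx^{k-1}=x^{k-2}$, and the cancellation property of Lemma~\ref{lem:p2}(iii) ($stu=u$ implies $tu=u$, applied with $s=z$, $t=x$, $u=x^{k-2}$) yields $x^{k-1}=x^{k-2}$. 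Iterating from $k=n$ downwards collapses all powers until $x^2=x$, which contradicts $zx\leqslant e<zx^2$; the inequality $x^2\backslash e\leqslant x\backslash e$ is then obtained by a second, similar descent. Neither this descent nor the cancellation lemma driving it appears in your sketch, and your plan of ``evaluating $x^kb$ and $bx^k$ near the jump'' does not substitute for it. A secondary weakness: your appeal to duality for the case $x\leqslant e$ is unsubstantiated --- (Wcm) already forces $x\backslash e=e/x$, so replacing $x\backslash e$ by $e/x$ changes nothing, and indeed the paper's argument proves both inequalities by separate descents with no case split on $x$ versus $e$ at all.
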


\begin{proof}
\begin{enumerate}
\item[(i)] Let $\mathcal{A}$ be a finite ${\rm {\bf HpsUL}}^\ast$-chain.  We prove that $x\backslash e=x^2\backslash e$ for all $x$ in $A$.
Since $\mathcal{A}$ is finite, there is a positive integer
$n$ such that $x^n= x^{n - 1}$ for all $x\in A$.

 Suppose that $x\backslash e>x^2\backslash e$ then
$x^2(x\backslash e)>e$.  Let $z=x\backslash e$ then
$xz\leqslant e<x^2z$.  Thus $zx\leqslant e<zx^2$ by $\mbox{(Wcm)}$.

If $x^k=x^{k-1}$ and $zx\leqslant e<zx^2$ for any $k\geqslant 3$.  Then
$zx^{k-1}\leqslant x^{k-2}\leqslant zx^k$.  Thus $zx^{k-1}=x^{k-2}$ by
$x^k=x^{k-1}$.  Hence $x^{k-1}=x^{k-2}$ by Lemma~\ref{lem:p2} (iii).

 Since $x^n=x^{n-1}$ and $zx\leqslant e<zx^2$, then $x^{n-1}=x^{n-2}$, {\dots}, $x^2=x$ by repeatedly applying the property above. Thus $zx=zx^2$,  a
contradiction and hence $x\backslash e\leqslant x^2\backslash e$.  Similarly,  we can prove
that $x^2\backslash e\leqslant x\backslash e$.  Thus $x\backslash e=x^2\backslash  e$.

\item[(ii)]  follows from (i) and Theorem~\ref{thm:P1} (ii).
\end{enumerate}
\end{proof}

Clearly, Lemmas~\ref{lem:p1} $\sim$~\ref{lem:c2a}  hold for all ${\rm {\bf UL}}_\omega $ and ${\rm {\bf IUL}}_\omega $-algebras.

\section{Blok and Alten's construction for ${\rm {\bf HpsUL}}_\omega ^\ast
$, ${\rm {\bf UL}}_\omega $, ${\rm {\bf IUL}}_\omega $-algebras}
\label{sec:bac}

\begin{definition}
Given an ordered algebra $\mathcal{A}=\left\langle
{A,\left\langle {f_i^\mathcal{A} :i\in I} \right\rangle ,\leqslant
^\mathcal{A}} \right\rangle $ (of any type), with $\leqslant ^\mathcal{A}$ a (partial) order on $A$, and any non-empty subset $B\subseteq A$,
the partial subalgebra $\mathcal{B}$ of $\mathcal{A}$ with domain $B$ is the ordered partial algebra $\mathcal{B}=\left\langle {B,\left\langle
{f_i^\mathcal{B} :i\in I} \right\rangle ,\leqslant ^\mathcal{B}}
\right\rangle $, where $a\leqslant ^\mathcal{B}b$ iff $a\leqslant
^\mathcal{A}b$ for all $a,b\in B$, and for each $i\in I$, $f_i$ $ k$-ary,
$b_1 ,\dots ,b_k \in B$,
\[
f_i^\mathcal{B} (b_1 ,\dots ,b_k )=
\begin{cases}
 f_i^\mathcal{A} (b_1 ,\dots ,b_k )&\quad\text{if } f_i^\mathcal{A} (b_1 ,\dots ,b_k )\in B, \\
 \mbox{undefined}&\quad \text{if } f_i^\mathcal{A} (b_1 ,\dots ,b_k )\notin B.
  \end{cases}
  \]
\end{definition}

\begin{definition}
A partial embedding of an ordered partial algebra
$\mathcal{B}$ into an ordered algebra $\mathcal{A}$ is a 1-1 map $\iota
:B\to A$ such that (i) $a\leqslant ^\mathcal{B}b$ iff $\iota (a)\leqslant
^\mathcal{A}\iota (b)$ for all $a,b\in B$; (ii) $\iota (f_i^\mathcal{B} (b_1
,\dots ,b_k ))=f_i^\mathcal{A} (\iota (b_1 ),\dots ,\iota (b_k ))$ if
$f_i^\mathcal{B} (b_1 ,\dots ,b_k )$ is defined for some operation $f_i $
and $b_1 ,\dots ,b_k \in B$ where $f_i^\mathcal{A} $ denotes the realization
of $f_i $ in $\mathcal{A}$.
\end{definition}

\begin{definition}
A class ${\rm {\bf K}}$ of ordered algebras of the same type has
the finite embeddability property (FEP for short) if every finite partial
subalgebra $\mathcal{B}$ of any algebra $\mathcal{A}\in {\rm {\bf K}}$ can
be partially embedded into some finite member of~${\rm {\bf K}}$.
\end{definition}

\begin{lemma}\label{lem:Ksi}
 Let ${\rm {\bf K}}$ be a variety and ${\rm {\bf K}}_{si} $
be the class of all subdirectly irreducible members of ${\rm {\bf K}}$. Then
${\rm {\bf K}}$ has the FEP if ${\rm {\bf K}}_{si} $ has the FEP.
\end{lemma}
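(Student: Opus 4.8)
The plan is to reduce the FEP for ${\rm {\bf K}}$ to the assumed FEP for ${\rm {\bf K}}_{si}$ by means of Birkhoff's subdirect representation theorem, exploiting that a variety is closed under products. Fix $\mathcal{A}\in {\rm {\bf K}}$ and a finite partial subalgebra $\mathcal{B}$ of $\mathcal{A}$ with domain $B$. Since ${\rm {\bf K}}$ is a variety, $\mathcal{A}$ admits a subdirect embedding $\mathcal{A}\hookrightarrow \prod_{i\in I}\mathcal{A}_i$ with each $\mathcal{A}_i\in {\rm {\bf K}}_{si}$; write $\pi_i:\mathcal{A}\to \mathcal{A}_i$ for the (surjective) projection homomorphisms, so that for $a,a'\in A$ one has $a\leqslant^{\mathcal{A}}a'$ iff $\pi_i(a)\leqslant^{\mathcal{A}_i}\pi_i(a')$ for every $i\in I$.

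Next I would cut the index set down to a finite one. Because $B$ is finite, there are only finitely many pairs $(b,b')$ with $b\not\leqslant^{\mathcal{B}}b'$; for each such pair choose, using the componentwise description of the order above, an index $i(b,b')\in I$ with $\pi_{i(b,b')}(b)\not\leqslant \pi_{i(b,b')}(b')$, and let $I_0$ be the (finite) set of all indices so chosen. For each $i\in I_0$ the image $B_i:=\pi_i(B)$ is a finite subset of $A_i$, hence the domain of a finite partial subalgebra $\mathcal{B}_i$ of $\mathcal{A}_i$. Since $\mathcal{A}_i\in {\rm {\bf K}}_{si}$ and ${\rm {\bf K}}_{si}$ has the FEP by hypothesis, $\mathcal{B}_i$ partially embeds, via some map $\iota_i$, into a finite algebra $\mathcal{C}_i\in {\rm {\bf K}}_{si}\subseteq {\rm {\bf K}}$.

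Now set $\mathcal{C}:=\prod_{i\in I_0}\mathcal{C}_i$, a finite product of finite algebras, hence finite, lying in ${\rm {\bf K}}$ because a variety is closed under products. Define $\varphi:B\to C$ by $\varphi(b):=(\iota_i(\pi_i(b)))_{i\in I_0}$, and verify that $\varphi$ is a partial embedding of $\mathcal{B}$ into $\mathcal{C}$ componentwise, using that operations and order in a product are computed coordinatewise. For the order: if $b\leqslant^{\mathcal{B}}b'$ then $\pi_i(b)\leqslant \pi_i(b')$ for all $i$, and each $\iota_i$ preserves $\leqslant$, so $\varphi(b)\leqslant^{\mathcal{C}}\varphi(b')$; conversely, if $b\not\leqslant^{\mathcal{B}}b'$ then the chosen index $i(b,b')\in I_0$ witnesses $\pi_i(b)\not\leqslant \pi_i(b')$, and since $\iota_i$ reflects $\leqslant$ this forces $\varphi(b)\not\leqslant^{\mathcal{C}}\varphi(b')$; in particular $\varphi$ is injective. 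For operations: whenever $f^{\mathcal{B}}(b_1,\dots,b_k)=c$, applying $\pi_i$ gives $f^{\mathcal{A}_i}(\pi_i(b_1),\dots,\pi_i(b_k))=\pi_i(c)\in B_i$, so $f^{\mathcal{B}_i}$ is defined there with value $\pi_i(c)$, whence the partial-embedding property of $\iota_i$ yields $f^{\mathcal{C}_i}(\iota_i(\pi_i(b_1)),\dots,\iota_i(\pi_i(b_k)))=\iota_i(\pi_i(c))$ for every $i\in I_0$; reading this coordinatewise gives $f^{\mathcal{C}}(\varphi(b_1),\dots,\varphi(b_k))=\varphi(c)$.

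The only real obstacle is that the subdirectly irreducible factors $\mathcal{A}_i$ need not be finite, so one cannot simply project $B$ into finitely many coordinates and stop; this is exactly where the hypothesis on ${\rm {\bf K}}_{si}$ is used, to replace each possibly infinite factor $\mathcal{A}_i$ by a finite $\mathcal{C}_i$ that absorbs the finitely many relevant order- and operation-facts about $B_i$. Everything else is the routine bookkeeping of selecting finitely many separating coordinates and checking the product map coordinatewise.
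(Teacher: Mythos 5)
Your proof is correct, and it is essentially the same argument the paper relies on: the paper gives no inline proof but defers to \cite[Lemma 20]{CMM}, whose proof is exactly this standard route (Birkhoff subdirect representation, finitely many coordinates separating the failed order relations on the finite set $B$ --- which also yields injectivity via antisymmetry --- the FEP of ${\rm {\bf K}}_{si}$ applied factorwise, and a finite product in the variety). No gaps; your observation that operation facts need only be preserved coordinatewise, while only the order facts need separating coordinates, is precisely the key bookkeeping point.
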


\begin{proof}
See~\cite[Lemma 20]{CMM}.
\end{proof}

\begin{definition}
Let $\mathcal{A}=\left\langle {A,\cdot ,\backslash
,/ ,\wedge ,\vee ,e,f,\bot ,\top } \right\rangle $ be an  ${\rm {\bf HpsUL}}_\omega ^\ast $-chain and
$\mathcal{B}=\left\langle {B,\cdot ,\backslash ,/ ,\wedge ,\vee ,e,f,\bot
,\top } \right\rangle $ be a partial subalgebra of $\mathcal{A}$ such that
$\left\{\, {e,f,\bot ,\top } \right\}\subseteq B$. Let
$\mathcal{M}=\left\langle {M,\cdot ,\wedge ,\vee ,e,f,\bot ,\top }
\right\rangle $ be the linearly ordered submonoid of \\
$\left\langle {A,\cdot
,\wedge ,\vee ,e,f,\bot ,\top } \right\rangle $ generated by $B$.

Let $a_{1}, \dots, a_{n} \in M$ and let $\delta _1, \dots, \delta _n \in
\{\,l,r\,\}$ ($l$ and $r$ stand for ``left" and ``right", respectively). We will
write ${\rm {\bf a}}^{\rm {\bf \delta }}$ to denote the sequence
$a_1^{\delta _1 } \dots a_n^{\delta _n } $, we will use $\varepsilon $ to
denote the empty sequence and we denote by$M^{l,r}$ the set of all possible
${\rm {\bf a}}^{\rm {\bf \delta }}$, that is, $$M^{l,r}=\{\,\,a_{1}^{\delta _1 }
\dots a_n^{\delta _n } \mid n<\omega ; a_1 ,\dots ,a_n \in M;\delta _1 ,\dots
,\delta _n \in \{\,\,l,r\,\,\}\,\,\}. $$
Clearly any two elements of $M^{l,r}$ can be concatenated to form a new
element of $M^{l,r}$. The sequence ${\rm {\bf a}}^{\rm {\bf \delta }}$ is to
be understood as a unary polynomial operating on $M$, defined inductively as
follows: For each $c\in M$, set $\varepsilon (c)=c$ and, for ${\rm {\bf
a}}^{\rm {\bf \delta }}\in M^{l,r}$ and $b\in M$, set ${\rm {\bf a}}^{\rm
{\bf \delta }}b^l(c)={\rm {\bf a}}^{\rm {\bf \delta }}(b\cdot c)$ and ${\rm
{\bf a}}^{\rm {\bf \delta }}b^r(c)={\rm {\bf a}}^{\rm {\bf \delta }}(c\cdot
b)$.
\end{definition}

For each ${\rm {\bf a}}^{\rm {\bf \delta }}\in M^{l,r}$ and $b\in B$, define
$$({\rm {\bf a}}^{\rm {\bf \delta }})^{-1}\left( b \right]=\left\{\, {c\in
M\mid {\rm {\bf a}}^{\rm {\bf \delta }}(c)\leqslant b} \right\},\left( b
\right]=\left\{\, {c\in M\mid c\leqslant b} \right\}, $$
$$\bar {D}=\left\{\, {({\rm
{\bf a}}^{\rm {\bf \delta }})^{-1}\left( b \right]\mid {\rm {\bf a}}^{\rm {\bf
\delta }}\in M^{l,r},b\in B} \right\},D=\left\{\,
{\bigcap \chi
\mid\chi \subseteq \bar {D}} \right\}.$$

For $X\subseteq M$, define $$C(X)=\bigcap \left\{\, {({\rm {\bf a}}^{\rm {\bf
\delta }})^{-1}\left( b \right]\in \bar {D}\mid X\subseteq ({\rm {\bf a}}^{\rm
{\bf \delta }})^{-1}\left( b \right]} \right\}.$$

For $X,Y\subseteq M$ and $X_i \subseteq M, i\in I$, define
$$XY=\left\{\,
{ab\mid a\in X,b\in Y} \right\}, Xa=X\left\{\, a \right\},X\cdot ^DY=C(XY), $$
$$X\backslash ^DY=\left\{\, {a\in M\mid Xa\subseteq Y} \right\},
Y/^DX=\left\{\,
{a\in M\mid aX\subseteq Y} \right\},$$
$\qquad\qquad\quad\bigvee_{i\in I}^D X_i =C(\bigcup _{i\in I} X_i),\bigwedge _{i\in I}^D X_i =\bigcap _{i\in I} X_i ,\sim X=X\backslash ^D\left( {f}
\right], $
$$\bot ^D=\left( {\bot } \right]=\left\{\, \bot \right\},\top
^D=\left( {\top } \right]=M,e^D=\left( {e} \right],f^D=\left( {f}
\right].$$

When $\mathcal{A}$ is an ${\rm {\bf HpsUL}}_\omega ^\ast $-chain, all
${\rm {\bf a}}^{\rm {\bf \delta }}\in M^{l,r}$ have the form
$a_1^{l } a_2^{r} $ by the associativity of ${\rm {\bf HpsUL}}_\omega ^\ast $. Then
$(a_1^{l } a_2^{r})^{-1}\left( b \right]=\left\{\,
{c\in M\mid a_1 ca_2 \leqslant b} \right\}.$

When $\mathcal{A}$ is an ${\rm {\bf UL}}_\omega $-chain, we need not
$M^{l,r}$  to define $({\rm {\bf a}}^{\rm {\bf \delta }})^{-1}\left( b \right]$,  and simplify it as
$\left({a\mapsto b} \right]=\left\{\, {c\in M \mid ac\leqslant b} \right\}$ for all $a\in M$, $b\in B$.

\begin{lemma} \label{lem:BCP1}
If $\mathcal{A}$ is an ${\rm {\bf HpsUL}}_\omega ^\ast $-chain. Then the following properties hold.
\begin{enumerate}
\item[(1)]  $\left\{\, {\bot ^D,\top ^D,t^D,f^D} \right\}\subseteq
\bar {D}$ and $C(X)=X$ for all $X\in D$;

\item[(2)]  $ X\subseteq C(X), C(X)\subseteq C(Y)$ if $X\subseteq Y$ and $C(C(X))=C(X)$ for all
$X,Y\subseteq M;$

\item[(3)]  $ (X\vee ^DY)\backslash ^DZ=(X\backslash ^DZ)\wedge ^D(Y\backslash ^DZ);$

\item[(4)]  If $X\subseteq M$ and $Y_i \subseteq M$ for $i\in I$, then $X\backslash^D(\bigcap_{i\in I} Y_i )=\bigcap _{i\in I} ( {X\backslash ^DY_i })$ and $(\bigcap _{i\in I} Y_i) /^{D} X=\bigcap _{i\in I} \left( {Y_i /^{D} X} \right)$;

\item[(5)]  If $X\subseteq M$ and $Y\in D$ then $X\backslash ^DY\in D$ and
 $Y/^DX\in D$ ;

\item[(6)]  $ X\cdot ^De^D=e^D\cdot ^DX=X,
\left( {X\cdot ^DY} \right)\cdot ^DZ=X\cdot
^D\left( {Y\cdot ^DZ} \right)=C(XYZ)$ for all $X,Y,Z\in D$ and, $ X\cdot ^DY\subseteq e^D$ iff $Y\cdot ^DX\subseteq e^D$  for all $X,Y\in D$ ;

\item[(7)]  $ X\cdot ^DY\subseteq Z$ iff $Y\subseteq X\backslash ^DZ$  iff $ X\subseteq Z /^D Y$ for all $X,Y,Z\in
D$;

\item[(8)]  $X\backslash ^D(Y\backslash ^DZ)=(Y\cdot ^DX)\backslash ^DZ$ for all $X,Y\subseteq M$ and $Z\in D$;

\item[(9)]  $e^D=(\lambda_{U}((X\vee ^DY)\backslash ^DX))\vee^D(\rho_{V}((X\vee ^DY)\backslash ^DY))$  for all
$X,Y,U,V\in D$.

\item[(10)]  $\sim\sim\sim X=\sim X$ for all $X\subseteq M$;

\item[(11)]   If $a,b\in B$ and $a\backslash b\in B$ then $\left({a\backslash b}\right]=\left( {a}
\right]\backslash ^D\left( {b} \right]$,
where, $(10)$ and $ (11)$ are valid if  $\mathcal{A}$ is an ${\rm {\bf UL}}_\omega$ $(\mathrm{or}\,\,{\rm {\bf IUL}}_\omega)$-chain.
\end{enumerate}
\end{lemma}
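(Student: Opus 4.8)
The plan is to verify each of the eleven assertions in Lemma~\ref{lem:BCP1} by exploiting the fact that $D$ is the closure system of all intersections of the basic sets $(\mathbf{a}^{\boldsymbol\delta})^{-1}(b]$, and that $C$ is the associated closure operator. I would organize the proof so that the purely lattice-theoretic / closure-operator facts come first and the residuation-specific facts come afterward, since the latter depend on the former. Concretely, I would first establish (2), which says $C$ is a genuine closure operator (extensive, monotone, idempotent); this is essentially formal once one observes that $C(X)$ is the smallest member of $\bar D$ (hence of $D$, by taking intersections) containing $X$. Then (1) is a matter of checking that each of $\bot^D,\top^D,e^D,f^D$ is itself of the form $(\mathbf{a}^{\boldsymbol\delta})^{-1}(b]$ for a suitable choice of sequence and bound $b\in B$ — here the hypothesis $\{e,f,\bot,\top\}\subseteq B$ is exactly what is needed — and that $X\in D$ implies $C(X)=X$ because members of $D$ are already intersections of basic sets and thus closed.

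Next I would handle the residual laws (3)--(5) and (7)--(8), which are the algebraic heart of the residuated structure. For (4) and (8) the key is that $X\backslash^D Y=\{a\in M\mid Xa\subseteq Y\}$ is defined by a universally quantified membership condition, so intersecting over $i\in I$ on the right, or composing via $(Y\cdot^D X)$, reduces to pointwise logical manipulations that do not even require $Y\in D$ in (4); the containment $X\backslash^D(\bigcap Y_i)=\bigcap(X\backslash^D Y_i)$ is immediate from $Xa\subseteq\bigcap Y_i$ iff $Xa\subseteq Y_i$ for all $i$. Property (5) then follows by writing $Y\in D$ as $Y=\bigcap_j (\mathbf{a}_j^{\boldsymbol\delta})^{-1}(b_j]$ and applying (4) together with (8) to rewrite each $X\backslash^D (\mathbf{a}_j^{\boldsymbol\delta})^{-1}(b_j]$ as a single basic set of the form $(\mathbf{c}^{\boldsymbol\gamma})^{-1}(b_j]$, using the definition $\mathbf{a}^{\boldsymbol\delta}b^l,\mathbf{a}^{\boldsymbol\delta}b^r$ of how sequences absorb extra factors; this is where the bookkeeping with the superscripts $l,r$ matters. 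Property (7) is the adjunction $X\cdot^D Y\subseteq Z$ iff $Y\subseteq X\backslash^D Z$ iff $X\subseteq Z/^D Y$, which for $Z\in D$ I would derive by combining $XY\subseteq Z\Leftrightarrow C(XY)\subseteq Z$ (valid because $Z$ is closed, by (1)/(2)) with the definitions of $\backslash^D,/^D$; property (3) is then the standard fact that a residual turns joins in its first argument into meets, formally $C(X\cup Y)a\subseteq Z$ iff $Xa\subseteq Z$ and $Ya\subseteq Z$.

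For (6) I would prove the monoid laws on $D$: the unit laws $X\cdot^D e^D=X$ come from $C(X\{e\}^{\le})=C(X\downarrow)=X$ for $X\in D$, and associativity $C(C(XY)Z)=C(XYZ)=C(X\,C(YZ))$ follows from the standard interaction of $C$ with the monoid multiplication on $M$ (namely that $C$ is compatible with $\cdot$ in the sense $C(XY)=C(C(X)C(Y))$, which one first extracts from the definition of the basic sets). The weak-commutativity clause $X\cdot^D Y\subseteq e^D$ iff $Y\cdot^D X\subseteq e^D$ I would deduce from Lemma~\ref{lem:p1} lifted to products in $M$. Property (9) is the semilinearity/prelinearity identity (iv) transported to $D$; I expect to prove it by reducing to elements and invoking that $M$ is linearly ordered (being a submonoid of the chain $\mathcal{A}$), so the chain identity holds pointwise and is preserved under $C$. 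Finally (10), $\sim\sim\sim X=\sim X$, follows from the general residuated-lattice triple-negation law once $\sim X=X\backslash^D(f]$ is recognized as a residual with $(f]\in D$, and (11) — that the partial operation $\backslash$ on $B$ agrees with $\backslash^D$ whenever the former is defined — comes from comparing $(a\backslash b]$ with $(a]\backslash^D(b]$ directly, using that $(a]\backslash^D(b]=\{c\mid ac\le b\}$ and that $a\backslash b\in B$ makes this set an order-ideal determined by its top element $a\backslash b$. I expect the main obstacle to be (5) together with the sequence-rewriting in (8): carefully tracking how $(\mathbf{a}^{\boldsymbol\delta})^{-1}(b]$ behaves when one prepends or appends a factor $X$, so that the class of basic sets really is closed under $X\backslash^D(-)$ and $(-)/^D X$, is the delicate combinatorial step on which closure of $D$ under the residuals — and hence the whole residuated-lattice structure on $D$ — ultimately rests.
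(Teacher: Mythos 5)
Your proposal is correct in substance, but note that the paper itself does not prove this lemma at all: its ``proof'' is a citation to Blok and van Alten (\cite[Section 5]{BA1} and \cite[Section 2]{BA2}), so what you have written is essentially a reconstruction of the cited arguments rather than an alternative to anything in the paper. Your decomposition is exactly the standard one from those sources: first establish that $C$ is a closure operator whose closed sets are the intersections of the polynomial preimages $(\mathbf{a}^{\boldsymbol\delta})^{-1}(b]$, then the nucleus-style compatibility $C(X)C(Y)\subseteq C(XY)$ extracted from the $l,r$-bookkeeping, then residuation, the monoid laws, and finally the identities special to the class at hand. Your route buys self-containedness at the cost of the combinatorial care you yourself identify around (5) and (8); the paper's citation buys brevity, since (1)--(8) are generic facts about the Blok--van Alten construction and only (9)--(11) and the (Fin)-type property (handled separately in Lemma~\ref{lem:BCP2}) use anything about ${\rm\bf HpsUL}^{\ast}_{\omega}$-chains specifically.

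Three places in your sketch need tightening, though none is fatal. First, in (5), $X\backslash^{D}(\mathbf{a}^{\boldsymbol\delta})^{-1}(b]$ is not a \emph{single} basic set unless $X$ is a singleton; it is $\bigcap_{x\in X}(\mathbf{a}^{\boldsymbol\delta}x^{l})^{-1}(b]$, an intersection over all $x\in X$ --- harmless, since $D$ is closed under arbitrary intersections, but the statement as you phrased it is false. Second, (8) is not ``pointwise logical manipulation'' alone: pointwise reasoning gives $X\backslash^{D}(Y\backslash^{D}Z)=\{a\in M\mid YXa\subseteq Z\}$, and to identify this with $(Y\cdot^{D}X)\backslash^{D}Z=\{a\in M\mid C(YX)a\subseteq Z\}$ you need the nucleus property $C(YX)\{a\}\subseteq C(YXa)$ together with $Z\in D$; this is the same compatibility fact you deferred to (6), so it should be proved before (8), not after. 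Third, for (9) your mechanism ``the chain identity holds pointwise and is preserved under $C$'' is not a valid inference in general --- identities involving residuals need not survive closure pointwise; the clean argument is that every member of $D$ is a down-set of the linearly ordered monoid $M$, so $D$ itself is a chain under inclusion, and the semilinearity condition (iv) holds trivially in any residuated chain (if $X\subseteq Y$ then $(X\vee^{D}Y)\backslash^{D}Y\supseteq e^{D}$, forcing the $\rho_{V}$-disjunct to equal $e^{D}$). Finally, be explicit that the extensivity $X\subseteq\;\sim\sim X$ underlying your Galois-connection proof of (10) genuinely requires commutativity ($xa\leqslant f$ must yield $ax\leqslant f$, and weak commutativity (Wcm) only covers the constant $e$) --- this is precisely why the lemma restricts (10) and (11) to the ${\rm\bf UL}_{\omega}$ and ${\rm\bf IUL}_{\omega}$ cases.
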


\begin{proof}
See~\cite[Section 5]{BA1}  and~\cite[Section 2]{BA2}.
\end{proof}

\begin{lemma}  \label{lem:BCP2}
Let $\mathcal{A}$ be an ${\rm {\bf HpsUL}}_\omega ^\ast $-chain.  Then $X\cdot ^DY\subseteq \left( e \right]$ iff $X\cdot
^DY\cdot ^DY\subseteq \left( e \right]$ for all $X,Y\in D$.
\end{lemma}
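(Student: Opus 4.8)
The plan is to reduce the statement about the closed sets $X\cdot^D Y$ and $X\cdot^D Y\cdot^D Y$ to a purely elementwise inequality in the chain $M$, and then to invoke Lemma~\ref{lem:p1}.

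First I would record two reductions using Lemma~\ref{lem:BCP1}. Since $X,Y\in D$, part~(6) gives $X\cdot^D Y=C(XY)$ and $X\cdot^D Y\cdot^D Y=(X\cdot^D Y)\cdot^D Y=C(XYY)$. Next, because $e\in B$ and the empty sequence $\varepsilon$ lies in $M^{l,r}$, we have $(e]=\varepsilon^{-1}(e]\in\bar D\subseteq D$, so $(e]$ is a closed set and $C((e])=(e]$ by part~(1). Combining this with the extensivity and monotonicity of $C$ from part~(2), for any $Z\subseteq M$ one gets $C(Z)\subseteq(e]$ iff $Z\subseteq(e]$: if $Z\subseteq(e]$ then $C(Z)\subseteq C((e])=(e]$, and conversely $Z\subseteq C(Z)\subseteq(e]$. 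Applying this to $Z=XY$ and to $Z=XYY$ reduces the lemma to the claim that $XY\subseteq(e]$ iff $XYY\subseteq(e]$, i.e. $ab\leqslant e$ for all $a\in X,b\in Y$ iff $abb'\leqslant e$ for all $a\in X$ and $b,b'\in Y$.

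For the backward implication I would simply take $b'=b$: from $ab^2\leqslant e$ we get $ab\leqslant e$ by Lemma~\ref{lem:p1}(i). For the forward implication, given $a\in X$ and $b,b'\in Y$ with $ab\leqslant e$ and $ab'\leqslant e$, I would use that $M$ is linearly ordered: if $b'\leqslant b$ then $abb'\leqslant ab^2$, and $ab^2\leqslant e$ follows from $ab\leqslant e$ by Lemma~\ref{lem:p1}(i); symmetrically, if $b\leqslant b'$ then $abb'\leqslant a(b')^2\leqslant e$. In either case $abb'\leqslant e$, as required.

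The only real obstacle is the forward direction, where $b$ and $b'$ may differ whereas Lemma~\ref{lem:p1}(i) speaks only about a repeated factor $y^2$. Using the chain to compare $b$ and $b'$ and then bounding the mixed product $abb'$ above by a square ($ab^2$ or $a(b')^2$) is precisely what lets the single-variable doubling property of Lemma~\ref{lem:p1}(i) close the argument; the linearity of $M$ inherited from the chain $\mathcal{A}$ is essential here.
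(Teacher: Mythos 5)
Your proof is correct, and it shares the paper's outer frame: the same reduction via Lemma~\ref{lem:BCP1}(1),(2),(6) and $C(\left( e \right])=\left( e \right]$ from the closed sets $X\cdot ^DY=C(XY)$ and $X\cdot ^DY\cdot ^DY=C(XYY)$ to the elementwise claim $XY\subseteq \left( e \right]$ iff $XYY\subseteq \left( e \right]$, and the same backward direction (take $b'=b$ and apply Lemma~\ref{lem:p1}(i)). Where you genuinely diverge is the forward direction. The paper never compares $b$ and $b'$; instead, from $xy\leqslant e$ and $xy'\leqslant e$ it shuffles factors with weak commutativity: $yx\leqslant e$ by (Wcm), hence $yxxy'\leqslant e$, hence $y'yxx\leqslant e$ by (Wcm), hence $y'yx\leqslant e$ by Lemma~\ref{lem:p1}, hence $xy'y\leqslant e$ by (Wcm) again. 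You instead exploit the totality of the order on $M$: whichever of $b,b'$ is larger, the mixed product $abb'$ is dominated by a genuine square $ab^2$ or $a(b')^2$, to which Lemma~\ref{lem:p1}(i) applies directly. Your route is shorter and isolates the algebraic inputs as just monotonicity of the product plus the doubling property; the paper's route buys something different, namely that the elementwise implication holds in an \emph{arbitrary} ${\rm {\bf HpsUL}}_\omega ^\ast $-algebra using only (Wcm) and (Fin), with no appeal to linearity. So your closing remark that linearity of $M$ is ``essential'' should be read as essential \emph{to your argument}, not to the lemma: the paper's proof shows the hypothesis that $\mathcal{A}$ is a chain is not needed for this particular step, though of course it is available (and harmless to use) in the setting of Blok and Alten's construction, where $\mathcal{M}$ is a linearly ordered submonoid of the chain $\mathcal{A}$.
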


\begin{proof}  Let $X\cdot ^DY\subseteq \left( e \right]$. Then $C(XY)\subseteq
\left( e \right]$. Thus $XY\subseteq \left( e \right]$. Hence $xy\leqslant
e$ for all $x\in X$, $y\in Y$. Let $x\in X$, $y,y'\in Y$ then $xy\leqslant e$ and $xy'\leqslant e$. Thus $yx\leqslant e$ by
$\mbox{(Wcm)}$. Then $yxxy'\leqslant e$. Hence $y'yxx\leqslant e$ by
$\mbox{(Wcm)}$. Thus $y'yx\leqslant e$ by Lemma~\ref{lem:p1}.
Therefore $xy'y\leqslant e$ by $\mbox{(Wcm)}$. Thus $XYY\subseteq \left( e
\right]$. Then $C(XYY)\subseteq \left( e \right]$ by Lemma~\ref{lem:BCP1}(2) and
$C(\left( e \right])=\left( e \right]$. Hence $X\cdot ^DY\cdot ^DY\subseteq
\left( e \right]$ by Lemma~\ref{lem:BCP1}(6).

Let $X\cdot ^DY\cdot ^DY\subseteq \left( e \right]$. Then $C(XYY)\subseteq
\left( e \right]$ by Lemma~\ref{lem:BCP1}(6). Thus $XYY\subseteq \left( e \right]$. Let
$x\in X$, $y\in Y$ then $xyy\leqslant e$. Thus $xy\leqslant e$ by
Lemma~\ref{lem:p1}.  Hence $XY\subseteq \left( e \right]$ .
Therefore $C(XY)\subseteq \left( e \right]$ by Lemma~\ref{lem:BCP1}(2) and $C(\left( e
\right])=\left( e \right]$. Then $X\cdot ^DY\subseteq \left( e \right]$.
\end{proof}

\begin{lemma} \label{lem:BCP3}
\begin{enumerate}
\item[(i)] $\mathcal{D}=\left\langle {D,\cdot ^D,\backslash ^D,/
^D,\vee ^D,\wedge ^D,e^D,f^D,\bot ^D,\top ^D} \right\rangle $ is an ${\rm
{\bf HpsUL}}_\omega ^\ast $-algebra if $\mathcal{A}$ is an $_{ }{\rm {\bf
HpsUL}}_\omega ^\ast $-chain;

\item[(ii)]  $\mathcal{D}=\left\langle {D,\cdot ^D,\backslash
^D,\vee ^D,\wedge ^D,e^D,\bot ^D,\top ^D} \right\rangle $ is an ${\rm {\bf
UL}}_\omega$-algebra if $\mathcal{A}$ is an ${\rm {\bf UL}}_\omega$-chain;

\item[(iii)]  $\mathcal{D}=\left\langle {D,\vee ^D,\wedge ^D,\bot ^D,\top
^D} \right\rangle $ is a complete lattice;

\item[(iv)]  $(\bigwedge _{i\in I}^D X_i)\backslash
^DY=\bigvee _{i\in I}^D (X_i \backslash ^DY)$ and
$(\bigvee _{i\in I}^D X_i)\backslash ^DY=\bigwedge
_{i\in I}^D (X_i \backslash ^DY)$.
\end{enumerate}
\end{lemma}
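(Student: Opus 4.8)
The plan is to read parts (i)--(iii) off the structural facts already recorded in Lemmas~\ref{lem:BCP1} and~\ref{lem:BCP2}, and to reserve the real work for the left-hand identity in~(iv). The single observation that organizes everything is that $M$, being a submonoid of the chain $\mathcal{A}$, is linearly ordered, and that each defining map $c\mapsto {\rm {\bf a}}^{\rm {\bf \delta }}(c)$ is order-preserving; hence every basic set $({\rm {\bf a}}^{\rm {\bf \delta }})^{-1}(b]$ is a down-set of the chain $M$, so every member of $D$ is a down-set and $D$ is itself \emph{linearly ordered} by inclusion. For~(iii) I would then argue that $D$ is exactly the lattice of closed sets of the closure operator $C$: by Lemma~\ref{lem:BCP1}(1),(2), $C$ is a closure operator whose fixpoints are the members of $D$, and $D$ is closed under arbitrary intersections, contains the empty intersection $M=\top^D$, and contains $\bot^D=\{\bot\}$. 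Since $x\bot=\bot$ in any residuated lattice, $\bot$ lies in every basic set $({\rm {\bf a}}^{\rm {\bf \delta }})^{-1}(b]$, hence in every member of $D$, so $\bot^D$ is least. The standard closure-system argument then yields a complete lattice with $\bigwedge_{i}^{D}X_i=\bigcap_iX_i$ and $\bigvee_{i}^{D}X_i=C(\bigcup_iX_i)$.

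For (i) and (ii) the task is assembly. From~(iii), $\mathcal{D}$ is a bounded lattice; $\langle D,\cdot^D,e^D\rangle$ is a monoid and (Wcm) holds by Lemma~\ref{lem:BCP1}(6); residuation (condition (iii) in the definition of an ${\rm {\bf HpsUL}}$-algebra) is Lemma~\ref{lem:BCP1}(7); the semilinearity condition (iv) is Lemma~\ref{lem:BCP1}(9); and $f^D\in D$ by Lemma~\ref{lem:BCP1}(1). It remains to verify (Fin), i.e.\ $X\backslash^De^D=(X\cdot^DX)\backslash^De^D$. I would obtain this from Lemma~\ref{lem:BCP2} together with (Wcm): for any $Y\in D$,
\[
X\cdot^DY\subseteq e^D \iff Y\cdot^DX\subseteq e^D \iff Y\cdot^DX\cdot^DX\subseteq e^D \iff (X\cdot^DX)\cdot^DY\subseteq e^D ,
\]
where the two outer equivalences are Lemma~\ref{lem:BCP1}(6) (applied to the pairs $X,Y$ and $Y,X\cdot^DX$) and the middle one is Lemma~\ref{lem:BCP2}; residuation (Lemma~\ref{lem:BCP1}(7)) then turns the equality of these lower sets into the desired equality of residuals. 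Thus $\mathcal{D}$ is an ${\rm {\bf HpsUL}}^\ast_\omega$-algebra, proving~(i). For~(ii), when $\mathcal{A}$ is an ${\rm {\bf UL}}_\omega$-chain the monoid $M$ is commutative, so $XY=YX$ gives $X\cdot^DY=Y\cdot^DX$ and $\backslash^D=/^D$; the same verifications then show $\mathcal{D}$ is an ${\rm {\bf UL}}_\omega$-algebra.

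Finally~(iv). The second identity $(\bigvee_{i}^{D}X_i)\backslash^DY=\bigwedge_{i}^{D}(X_i\backslash^DY)$ is the infinite form of Lemma~\ref{lem:BCP1}(3) and follows at once from residuation (Lemma~\ref{lem:BCP1}(7)), since in the complete residuated lattice $\mathcal{D}$ the product $\cdot^D$ distributes over arbitrary $\bigvee^D$. The first identity is the genuine obstacle, for it is the non-standard law $(\bigwedge_{i}^{D}X_i)\backslash^DY=\bigvee_{i}^{D}(X_i\backslash^DY)$, which \emph{fails} in general complete residuated lattices and must exploit the construction. One inclusion is soft: since $\bigcap_jX_j\subseteq X_i$ and $\backslash^D$ is antitone in its first argument, each $X_i\backslash^DY\subseteq(\bigcap_jX_j)\backslash^DY$, and the latter lies in $D$ by Lemma~\ref{lem:BCP1}(5), so applying $C$ gives $\bigvee_{i}^{D}(X_i\backslash^DY)\subseteq(\bigwedge_{i}^{D}X_i)\backslash^DY$. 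The reverse inclusion is where the down-set/chain structure is essential. Writing $W=\bigcap_iX_i$ and $U=\bigcup_i(X_i\backslash^DY)$, I would take $a$ with $Wa\subseteq Y$ and show $a\in C(U)$, that is, that every basic closed set $({\rm {\bf b}}^{\rm {\bf \gamma }})^{-1}(d]$ (with $d\in B$) containing $U$ also contains $a$. When the infimum $W$ is attained, say $W=X_{i_0}$, both sides collapse to $X_{i_0}\backslash^DY$; the hard case is a proper intersection, where one uses that the $X_i\backslash^DY$ form an increasing chain of down-sets approximating $W\backslash^DY$ from below, together with the fact that the test sets $({\rm {\bf b}}^{\rm {\bf \gamma }})^{-1}(d]$ are down-sets cut out by order-preserving polynomials with parameter in $B$. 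I expect this verification---that $C$ exactly fills the gap between $\bigcup_i(X_i\backslash^DY)$ and $(\bigwedge_{i}^{D}X_i)\backslash^DY$---to be the main difficulty of the lemma, and the only point where more than the bookkeeping of Lemmas~\ref{lem:BCP1} and~\ref{lem:BCP2} is needed.
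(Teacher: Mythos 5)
Your handling of (i)--(iii) and of the (Fin) axiom is sound and essentially matches the paper, whose entire proof is a one-liner: (i) and (ii) are immediate from Lemma~\ref{lem:BCP1}(6),(7),(9) and Lemma~\ref{lem:BCP2}, (iii) is clear, and (iv) ``follows from (i), (ii) and (iii).'' The genuine gap is exactly where you yourself locate the difficulty: the hard inclusion of the first identity in (iv) is never proved in your proposal, only planned, and the plan cannot succeed, because in the generality in which you (and, read literally, the lemma) pose it, the identity is \emph{false} when the infimum is not attained. Concretely, let $\mathcal{A}$ be the standard G\"odel chain on $[0,1]$ with $x\cdot y=\min(x,y)$, $e=\top=1$, $f=\bot=0$; this is a ${\rm {\bf UL}}_\omega$-chain, since $e=\top$ makes (Fin) trivial. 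Take $B=\{\,0,1,\tfrac12\,\}\cup\{\,\tfrac12+\tfrac1n : n\geqslant 3\,\}$, so that $M=B$, and put $X_n=\left( 1\mapsto \tfrac12+\tfrac1n \right]$ and $Y=\left( 1\mapsto \tfrac12 \right]=\{\,0,\tfrac12\,\}$. Then $\bigwedge_{n}^{D}X_n=\bigcap_n X_n=\{\,0,\tfrac12\,\}$, so $(\bigwedge_n^D X_n)\backslash^D Y=M$, whereas $X_n\backslash^D Y=\{\,0,\tfrac12\,\}$ for every $n$ (test against $x=\tfrac12+\tfrac1{\max(m,n)}$) and hence $\bigvee_n^D(X_n\backslash^D Y)=C(\{\,0,\tfrac12\,\})=\{\,0,\tfrac12\,\}\neq M$. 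So your guiding picture---that the sets $X_i\backslash^D Y$ approximate $W\backslash^D Y$ from below and that $C$ ``exactly fills the gap''---is refuted: here the approximating chain is constant, closed, and strictly below $W\backslash^D Y$.

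What actually carries the lemma in its intended use is precisely the case you set aside as trivial: attained infima. Every member of $D$ is a down-set of the chain $M$, so $D$ is linearly ordered by inclusion; when $\mathcal{B}$ is finite---the only case needed for Theorem~\ref{thm:FMP1}, and the case in which Lemmas~\ref{lem:FMP3} and~\ref{lem:FMP4} make $D$ finite---any family in $D$ has finitely many distinct members, its intersection is its least member $X_{i_0}$, and both sides of the first identity collapse to $X_{i_0}\backslash^D Y$. The second identity, as you note, holds in any complete residuated lattice by residuation, and that is all the paper's appeal to (i)--(iii) can legitimately deliver; the paper's own justification of (iv) glosses over the same point your proposal leaves open (and Lemma~\ref{lem:BCP4}(iii), which invokes the first identity for a possibly infinite intersection $\bigcap_{i\in I}\left( a_i\mapsto b_i \right]$, needs the same finiteness or attainment hypothesis). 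So the repair is not a cleverer closure argument but a restriction: state and prove (iv)'s first identity for attained meets (equivalently, for finite $\mathcal{B}$, via Lemma~\ref{lem:FMP3}, which does not depend on this lemma, so no circularity arises), and observe that this suffices for everything downstream.
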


\begin{proof}  (i) and (ii) are immediate from Lemma~\ref{lem:BCP1}(6),(7),  (9) and Lemma~\ref{lem:BCP2}.  (iii) is clear.  (iv) follows  from (i),  (ii) and (iii).
\end{proof}

\begin{lemma} \label{lem:BCP4}
Let $\mathcal{A}$ be a linearly ordered ${\rm {\bf
IUL}}_\omega $-algebra and $\mathcal{B}$ be a partial subalgebra of
$\mathcal{A}$ such that $\left\{\, {e,f,\bot ,\top } \right\}\subseteq B$ and
$\neg b\in B$ for all $b\in B$. Then
\begin{enumerate}
\item[(i)] $\left( b \right]=\sim\sim\left( b\right]$;

\item[(ii)]  $\left( {a\mapsto b} \right]=\sim\sim\left( {a\mapsto b} \right]$
for all $\left( {a\mapsto b} \right]\in \bar {D}$;

\item[(iii)]  $ X=\sim\sim X$ for all $X\in D$;

\item[(iv)]  $\mathcal{D}=\left\langle {D,\cdot ^D,\to ^D,\vee
^D,\wedge ^D,e^D,f^D,\bot ^D,\top ^D} \right\rangle $ is an ${\rm {\bf
IUL}}_\omega $-algebra.
\end{enumerate}
\end{lemma}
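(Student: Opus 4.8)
The plan is to reduce parts (i)--(iii) to the single structural fact $\sim\sim\sim X=\sim X$ from Lemma~\ref{lem:BCP1}(10), which says that any set lying in the image of $\sim$ is a fixed point of $\sim\sim$, and then to read off (iv). Write $\neg x=x\backslash f$ for the negation of $\mathcal{A}$, so that $\neg\neg x=x$ and, by commutativity together with the currying law $x\backslash(y\backslash z)=(xy)\backslash z$, also $a\backslash b=\neg(a\cdot\neg b)$ hold in $\mathcal{A}$. The crucial observation is that every generator of $\bar D$ already lies in the image of $\sim$: for $a\in M$ and $b\in B$ with $\neg b\in B$, residuation in $\mathcal{A}$ turns $ac\leqslant b$ into $(a\neg b)c\leqslant f$, whence
$$\left({a\mapsto b}\right]=\{\,c\in M\mid (a\neg b)c\leqslant f\,\}=\{\,a\neg b\,\}\backslash^D\left( f\right]=\,\sim\{\,a\neg b\,\},$$
and here $a\neg b\in M$ precisely because the hypothesis $\neg b\in B\subseteq M$ places $\neg b$ in the submonoid $M$.

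Given this identity, parts (i) and (ii) are short. For (ii) I apply Lemma~\ref{lem:BCP1}(10) with $X=\{\,a\neg b\,\}$ to obtain $\sim\sim\left({a\mapsto b}\right]=\sim\sim\sim\{\,a\neg b\,\}=\sim\{\,a\neg b\,\}=\left({a\mapsto b}\right]$. Part (i) is then the special case $a=e$, since $\left( b\right]=\left({e\mapsto b}\right]$; alternatively one computes $\sim\left( b\right]=\left({\neg b}\right]$ directly (among the requirements $cd\leqslant f$ for $c\leqslant b$ the one at $c=b$ is binding, giving $d\leqslant\neg b$) and then $\sim\sim\left( b\right]=\sim\left({\neg b}\right]=\left({\neg\neg b}\right]=\left( b\right]$ by the involution of $\mathcal{A}$.

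For (iii) I note that $\sim\sim$ is a closure operator on the powerset of $M$. It is monotone because $\sim$ is order-reversing; it is inflationary because for $x\in X$ and any $s\in\sim X$ one has $xs\leqslant f$, hence $sx=xs\leqslant f$ by commutativity, so $x\in\sim\sim X$; and it is idempotent by Lemma~\ref{lem:BCP1}(10). Now any $X\in D$ has the form $\bigcap\chi$ with $\chi\subseteq\bar D$, and each $Y\in\chi$ satisfies $\sim\sim Y=Y$ by (i) and (ii); from $X\subseteq Y$ and monotonicity I get $\sim\sim X\subseteq\sim\sim Y=Y$, and intersecting over $\chi$ gives $\sim\sim X\subseteq X$. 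The reverse inclusion is inflationarity, so $X=\sim\sim X$.

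Finally, (iv) combines (iii) with the earlier results. Since an ${\rm {\bf IUL}}_\omega$-chain is in particular an ${\rm {\bf UL}}_\omega$-chain, Lemma~\ref{lem:BCP3}(ii) shows that $\mathcal{D}$ is already a commutative ${\rm {\bf UL}}_\omega$-algebra, and its negation is the map $X\mapsto X\backslash^D f^D=\,\sim X$; hence (iii) yields $\neg\neg X=\sim\sim X=X$ for all $X\in D$, which is exactly the involutivity axiom, so $\mathcal{D}$ is an ${\rm {\bf IUL}}_\omega$-algebra. I expect the only real obstacle to be the displayed identity $\left({a\mapsto b}\right]=\,\sim\{\,a\neg b\,\}$: it requires careful residuation and involution bookkeeping in $\mathcal{A}$ and a clear use of the hypothesis $\neg b\in B$ to keep $a\neg b$ inside $M$. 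Once it is in place, Lemma~\ref{lem:BCP1}(10) does the real work and the passage to arbitrary $X\in D$ through intersections is routine.
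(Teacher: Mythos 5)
Your proof is correct, and it reaches the same structural goal as the paper's --- exhibiting each generator of $D$ as a value of $\sim$ so that Lemma~\ref{lem:BCP1}(10) forces $\sim\sim$-fixedness, then spreading this over intersections --- but the execution differs at both key steps. For (ii) the paper works inside the $\mathcal{D}$-calculus: it first proves (i) via Lemma~\ref{lem:BCP1}(11) (using $b,f,\neg b\in B$ to get $\sim\left( b \right]=\left( \neg b \right]$ and hence $\sim\sim\left( b \right]=\left( b \right]$), and then computes $\left( a\mapsto b \right]=\{\,a\,\}\backslash^D\sim\sim\left( b \right]=\sim\left( \sim\left( b \right]\cdot^D\{\,a\,\} \right)$ using the currying law Lemma~\ref{lem:BCP1}(8), before invoking (10). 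You instead verify the single identity $\left( a\mapsto b \right]=\sim\{\,a\neg b\,\}$ by residuation and involution directly in $\mathcal{A}$ ($ac\leqslant b$ iff $(a\neg b)c\leqslant f$, legitimate since $\mathcal{A}$ is commutative and involutive, and $a\neg b\in M$ because $\neg b\in B\subseteq M$), which lets you skip Lemma~\ref{lem:BCP1}(8) and (11) entirely and obtain (i) as the special case $a=e$; this is a genuinely shorter path, at the cost of stepping outside $\mathcal{D}$ and back into computations in $\mathcal{A}$. For (iii) the paper writes $X=\bigwedge_{i\in I}^D\left( a_i\mapsto b_i \right]$ and pushes $\backslash^D\left( f \right]$ through the meet twice using the distribution laws of Lemma~\ref{lem:BCP3}(iv); your closure-operator argument (monotonicity of $\sim\sim$, inflationarity via commutativity, idempotence via (10), and $\sim\sim$-fixedness of the generators from (i) and (ii)) needs only set-level reasoning and avoids Lemma~\ref{lem:BCP3}(iv) altogether, which makes it slightly more elementary and self-contained. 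Part (iv) is identical in both treatments: Lemma~\ref{lem:BCP3}(ii) plus (iii), since the negation of $\mathcal{D}$ is $X\mapsto X\backslash^D f^D=\sim X$.
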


\begin{proof}
\begin{enumerate}
\item[(i)] Let $b\in B$ then $\sim\sim\left(b \right]=(\sim\left( b
\right])\backslash^D\left( f \right]=\left( {\left( b \right]\backslash^D\left( f
\right]} \right)\backslash^D\left( f \right]= \quad \left( {\neg b} \right]\backslash^D\left( f \right]= {\left( {\neg b\to f} \right]}=\left(b\right]$ by Lemma~\ref{lem:BCP1}(11) and $b,f,\neg b\in B$. Thus $\sim\sim\left(b \right]=\left(b\right]$.

\item[(ii)]  Let $\left( {a\mapsto b} \right]\in \bar {D}$. Then $\left( {a\mapsto b}
\right]=\{\,a\,\}\backslash ^D\left( b \right]=\{\,a\,\}\backslash ^D\sim\sim\left( b \right]=$\\
$\{\,a\,\}\backslash
^D\left( {\sim\left( b \right]\backslash ^D\left( f \right]} \right)=
(\sim\left( b \right]\cdot^D\{\,a\,\})\backslash ^D\left( f \right]=\sim\left( \sim\left( b \right]\cdot^D\{\,a\,\}\right)$ by (i) and Lemma~\ref{lem:BCP1}(8). Thus $\left( {a\mapsto b}
\right]=\sim\left( \sim\left( b \right]\cdot^D\{\,a\,\} \right)$. Hence  by
Lemma~\ref{lem:BCP1}(10), $\sim\sim\left( {a\mapsto b} \right]=\sim\sim\sim\left( \sim\left( b \right]\cdot^D\{\,a\,\}
\right)=\sim\left(  \sim\left( b \right]\cdot^D\{\,a\,\} \right)=\left( {a\mapsto b}
\right]$.

\item[(iii)]  Let $X\in D$. Then we can write $X=\bigcap _{i\in I} \left( {a_i \wedge
b_i } \right]=\bigwedge _{i\in I}^D \left( {a_i \mapsto b_i } \right]$. Then
\begin{flalign*}
\sim\sim X&=(X\backslash ^D\left( f \right])\backslash ^D\left( f \right]\\
&=((\bigwedge{_{i\in I}^{D} }\left( {a_i \mapsto b_i } \right])\backslash ^D\left( f \right])\backslash
^D\left( f \right]\\
&=(\bigvee{_{i\in I}^{D}} (\left( {a_i \mapsto b_i } \right]\backslash
^D\left( f \right]))\backslash ^D\left( f \right]\\
&=\bigwedge{ _{i\in I}^D} ((\left( {a_i
\mapsto b_i } \right]\backslash ^D\left( f \right])\backslash ^D\left( f \right])\\
&=\bigwedge{_{i\in I}^D} \left( {a_i \mapsto b_i } \right]=X.
 \end{flalign*}
 by (ii) and Lemma~\ref{lem:BCP3} (iv).

\item[(iv)]  is immediate from (iii) and Lemma~\ref{lem:BCP3} (ii).
\end{enumerate}
\end{proof}

\begin{lemma}\label{lem:BCP5}
The map $\iota :\mathcal{B}\to \mathcal{D}$, which sends
$a$ to $\left( {a} \right]=\left\{\, {x\in M:x\leqslant a} \right\}$ for $a\in
B$, is an partial embedding of the partial subalgebra $\mathcal{B}$ of
$\mathcal{A}$ into $\mathcal{D}$. Moreover, $\iota (e)=e^D$, $\iota (f)=f^D$,
$\iota (\bot )=\bot ^D$, $\iota (\top )=\top ^D$ and $\iota $ preserves all
meets and joins that exist in $\mathcal{B}$.
\end{lemma}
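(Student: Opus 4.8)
The plan is to verify that the map $\iota$ sending $a \mapsto (a] = \{x \in M : x \leqslant a\}$ is a partial embedding in the sense of the Definition, that is, to check the two conditions: (i) $\iota$ reflects and preserves the order, and (ii) $\iota$ commutes with each operation wherever the partial operation on $\mathcal{B}$ is defined. First I would establish that $\iota$ is well-defined and injective. Since $B \subseteq M$, each $a \in B$ lies in $M$, so $(a]$ is a legitimate subset of $M$; moreover $(a] \in \bar{D} \subseteq D$ because $(a] = (\varepsilon)^{-1}(a]$ (taking the empty sequence, or in the commutative case the identity $e$), so the values genuinely land in $\mathcal{D}$. Injectivity follows from the order-reflection: if $(a] = (b]$ then $a \leqslant b$ and $b \leqslant a$, whence $a = b$ by antisymmetry.

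Next I would verify the order condition (i). The forward direction $a \leqslant^{\mathcal{B}} b \Rightarrow (a] \subseteq (b]$ is immediate from the definition of $(a]$ and transitivity. For the converse, if $(a] \subseteq (b]$, then since $a \in (a]$ (as $a \leqslant a$) we get $a \in (b]$, i.e.\ $a \leqslant b$; here I use that $a \in M$, which holds because $B \subseteq M$. Thus $\iota$ is an order-embedding, which simultaneously gives injectivity and the equalities $\iota(\bot) = (\bot] = \bot^D$, $\iota(\top) = (\top] = \top^D$, $\iota(e) = (e] = e^D$, $\iota(f) = (f] = f^D$ directly from the definitions of the constant elements of $\mathcal{D}$.

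The substantive part is condition (ii): for each operation $\star \in \{\cdot, \backslash, /, \wedge, \vee\}$ and $a, b \in B$ with $a \star b \in B$ (so that $\star^{\mathcal{B}}(a,b)$ is defined), I must show $\iota(a \star b) = \iota(a) \star^D \iota(b)$. The residual cases $\backslash$ and $/$ are handled by Lemma~\ref{lem:BCP1}(11), which states precisely that $(a \backslash b] = (a] \backslash^D (b]$ when $a, b, a\backslash b \in B$; the $/$ case is symmetric. For the lattice operations, meet is easy: $(a \wedge b] = (a] \cap (b] = (a] \wedge^D (b]$ using $\wedge^D = \cap$, since $x \leqslant a \wedge b$ iff $x \leqslant a$ and $x \leqslant b$. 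For $\cdot$ and $\vee$ the subtlety is that the operations in $\mathcal{D}$ involve the closure operator $C$: one has $(a] \cdot^D (b] = C((a](b])$ and $(a] \vee^D (b] = C((a] \cup (b])$, so I must check that when $ab \in B$ (respectively $a \vee b \in B$) the closure collapses to the principal downset. For join, $a \vee b \in B$ gives $(a \vee b] \in \bar{D}$, and since $(a] \cup (b] \subseteq (a \vee b]$ with $(a \vee b]$ already closed (being in $D$, so $C((a\vee b]) = (a\vee b]$ by Lemma~\ref{lem:BCP1}(1)), the closure of $(a] \cup (b]$ equals $(a \vee b]$ provided no smaller closed set separates them; this follows because any $(\mathbf{a}^{\boldsymbol\delta})^{-1}(c]$ containing both $(a]$ and $(b]$ must contain $a \vee b$ by linearity of $\mathcal{A}$. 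The product case is analogous, using that $ab \in B$ forces $C((a](b]) = (ab]$. The main obstacle I anticipate is exactly this closure-collapse argument for $\cdot^D$ and $\vee^D$: one must argue carefully, invoking the chain structure of $\mathcal{A}$ and the defining form of the basic closed sets in $\bar{D}$, that the $C$-closure of the relevant product or union set does not overshoot the principal downset $(ab]$ or $(a \vee b]$. Finally, the preservation of arbitrary existing meets and joins follows by the same reasoning applied to the infinitary operations $\bigwedge^D = \bigcap$ and $\bigvee^D = C(\bigcup)$, again using linearity to control the closure.
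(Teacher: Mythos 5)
Your proposal reconstructs, in full, the argument the paper itself only points to: the paper's proof of Lemma~\ref{lem:BCP5} is a one-line citation of Blok and van Alten's procedure~\cite[Lemma 2.6]{BA2}, and your skeleton --- order-embedding via $a\in\left( a \right]$ and $B\subseteq M$, residuals via Lemma~\ref{lem:BCP1}(11), meet as plain intersection, constants read off from the definitions, and a closure-collapse computation for $\cdot^D$ and $\vee^D$ --- is exactly that procedure. So the approach is the right one, and the binary cases are essentially correct as you argue them.

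One step would fail as literally written, namely the claim that ``linearity controls the closure,'' especially in the infinitary case. The correct mechanism for the collapse is this: every basic closed set $({\rm {\bf a}}^{\rm {\bf \delta }})^{-1}\left( c \right]$ is a \emph{down-set} of $M$ (since fusion is order-preserving), and the generating set already contains the top of the target down-set: $ab\in \left( a \right]\left( b \right]$ since $a\leqslant a$, $b\leqslant b$, and $a\vee b\in \left( a \right]\cup \left( b \right]$ (here linearity does help, as $a\vee b\in\{\,a,b\,\}$). Hence every closed superset of the generating set contains all of $\left( ab \right]$ (resp.\ $\left( a\vee b \right]$), while $ab\in B$ puts $\left( ab \right]$ in $\bar D$ and gives the reverse inclusion; you never state the down-set property or the membership $ab\in\left( a \right]\left( b \right]$, which are what actually do the work. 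For an infinite existing join $a=\bigvee_i a_i$ with $a\in B$, neither ingredient survives: a down-set containing every $a_i$ need not contain their supremum, and linearity does not rescue this since an infinite supremum in a chain need not be attained among the $a_i$. What is needed instead is that each unary polynomial ${\rm {\bf a}}^{\rm {\bf \delta }}$ preserves existing joins --- which holds because fusion is residuated, so ${\rm {\bf a}}^{\rm {\bf \delta }}(\bigvee_i a_i)=\bigvee_i {\rm {\bf a}}^{\rm {\bf \delta }}(a_i)\leqslant c$ whenever each ${\rm {\bf a}}^{\rm {\bf \delta }}(a_i)\leqslant c$. In the FEP application $\mathcal{B}$ is finite, so your chain argument covers every case actually used downstream; but the lemma as stated asserts preservation of \emph{all} existing meets and joins, and there the residuation argument, not linearity, is the missing idea.
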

\begin{proof}  It is proved by a procedure similar to that of~\cite[Lemma 2.6]{BA2}.
\end{proof}

\section{Finite embeddability property and decidability }\label{sec:fmp}

In this section we show that $\mathbf{UL}_\omega $ and $\mathbf{IUL}_\omega $ have the finite
embeddability property and are hence decidable.

We sometimes write $p_1 \cdot \cdots
\cdot p_k $ by $\prod\limits_{i=1}^k {p_i } $ for simplicity. We denote
the $i$-th component of $\alpha =(m_{1} ,\dots ,m_k )\in \mathbb{N}^k$ by
$\alpha (i)$, i.e., $\alpha (i)=m_i $ for all $1\leqslant i\leqslant k$.

\begin{definition}
A subsequence index is a mapping $\sigma :\mathbb{Z}_+
\to \mathbb{Z}_+ $  such that  $n\leqslant \sigma (n)<\sigma (n+1)$ for all
$n$ in $\mathbb{Z}_+ $.
\end{definition}

\begin{remark}There is a correspondence between the set of subsequences
of a sequence and the set of subsequence indexes, i.e., (i) Let $\{\,\alpha _n
\,\}$ be a sequence and $\sigma $ be a subsequence index then $\{\,\alpha_{\sigma(n)} \,\}$ is a subsequence of $\{\,\alpha _n \,\}$;  (ii) There is a subsequence index $\sigma $ for each subsequence $\{\,\alpha _{n_l } \,\}$ of
$\{\,\alpha _n \,\}$ such that $\sigma (l)=n_l $ for all $l $ in
$\mathbb{Z}_+ $.
\end{remark}

\begin{definition}Let $k\in \mathbb{Z}_+ $ and $\{\,\alpha _n \,\}$ be a
sequence in $\mathbb{N}^k$. $\{\,\alpha _n \,\}$ is an $\Omega $-sequence if
$\{\,\alpha _n (i)\,\}$ is an infinite constant chain or an infinite strictly ascending
chain for all $1\leqslant i\leqslant k$. A subsequence index $\sigma $ is an
$\Omega $-subsequence index  of $\{\,\alpha _n \,\}$ if $\{\,\alpha _{\sigma (n)} \,\}$ is an $\Omega
$-subsequence of $\{\,\alpha _n \,\}$.
\end{definition}

\begin{lemma}\label{lem:FMP1}
\begin{enumerate}
\item[(i)] Let $\{\,\alpha _n \,\}$ be a sequence in $\mathbb{N}$ then
there exists a subsequence index $\sigma $ such that $\{\,\alpha _{\sigma(n)} \,\}$ is an $\Omega $-subsequence of $\{\,\alpha _n \,\}$;

\item[(ii)]  The composition $\sigma _1 \circ \sigma _2 $ of two subsequence indexes
$\sigma _1 $ and $\sigma _2 $ of $\{\,\alpha _n \,\}$ is a subsequence index of $\{\,\alpha _n \,\}$;

\item[(iii)]  $\sigma _2 \circ\sigma _1 $ is an $\Omega $-subsequence index  of $\{\,\alpha _n \,\}$ if $\sigma _2 $ is an $\Omega
$-subsequence index  of $\{\,\alpha _n \,\}$ and $\sigma _1 $ is a subsequence index of $\{\,\alpha _n \,\}$.
\end{enumerate}
\end{lemma}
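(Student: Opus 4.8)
```latex
The plan is to prove all three parts by an iterated pigeonhole argument, treating each coordinate of $\mathbb{N}^k$ separately and then stitching the coordinates together. Part~(i) is the foundational claim: from an arbitrary sequence $\{\alpha_n\}$ in $\mathbb{N}$ we must extract a subsequence that is either constant or strictly increasing. First I would invoke the classical fact that every sequence of natural numbers admits a monotone (non-decreasing or non-increasing) subsequence; since the values lie in $\mathbb{N}$, a non-increasing subsequence must eventually stabilize and hence contains a constant sub-subsequence, while a non-decreasing subsequence either stabilizes (giving a constant chain) or contains infinitely many strict increases (giving a strictly ascending chain). In either case we obtain the desired $\Omega$-subsequence of scalars, and the associated index $\sigma$ exists by the correspondence recorded in the Remark. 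The main care here is purely bookkeeping: verifying that the extracted index $\sigma$ genuinely satisfies $n \leqslant \sigma(n) < \sigma(n+1)$.

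For part~(ii), I would argue directly from the defining inequalities. Suppose $\sigma_1$ and $\sigma_2$ are subsequence indexes, so each satisfies $n \leqslant \sigma_j(n) < \sigma_j(n+1)$. To check that $\sigma_1 \circ \sigma_2$ is again a subsequence index, I would establish the three required properties in turn: strict monotonicity follows because $\sigma_2(n) < \sigma_2(n+1)$ and $\sigma_1$ is strictly increasing, so $\sigma_1(\sigma_2(n)) < \sigma_1(\sigma_2(n+1))$; the inequality $n \leqslant (\sigma_1 \circ \sigma_2)(n)$ follows by composing $n \leqslant \sigma_2(n)$ with $\sigma_2(n) \leqslant \sigma_1(\sigma_2(n))$, using that any subsequence index is expansive. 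This step is entirely elementary and carries no real obstacle.

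Part~(iii) is where the statement is genuinely useful for later applications, and it is the one I would treat most carefully. Here $\sigma_2$ is an $\Omega$-subsequence index of $\{\alpha_n\}$, meaning $\{\alpha_{\sigma_2(n)}\}$ is an $\Omega$-sequence, and $\sigma_1$ is merely an arbitrary subsequence index. By part~(ii) the composite $\sigma_2 \circ \sigma_1$ is a subsequence index, so the one thing left to verify is that $\{\alpha_{(\sigma_2 \circ \sigma_1)(n)}\}$ is still an $\Omega$-sequence. The key observation is that passing to a further subsequence of an $\Omega$-sequence preserves the $\Omega$-property coordinatewise: a subsequence of an infinite constant chain is again constant, and a subsequence of an infinite strictly ascending chain is again strictly ascending, precisely because $\sigma_1$ is strictly increasing. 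Applying this in each coordinate $1 \leqslant i \leqslant k$ yields that $\{\alpha_{\sigma_2(\sigma_1(n))}(i)\}$ is constant or strictly ascending, which is exactly the $\Omega$-condition.

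The main obstacle, such as it is, lies in keeping the composition order and the coordinatewise reasoning aligned with the definitions: the notation $\sigma_2 \circ \sigma_1$ versus $\sigma_1 \circ \sigma_2$ must match how subsequence indexes compose to yield sub-subsequences, and one must be sure that the relevant index in part~(iii) is the \emph{outer} $\Omega$-index composed with a further thinning. Once this is set up correctly, each part reduces to the two elementary facts that natural-number sequences have monotone subsequences and that the $\Omega$-property is inherited by subsequences, so I expect the proof to be short.
```
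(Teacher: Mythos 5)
Your proof is correct and takes essentially the same approach as the paper's (very terse) argument: your parts (ii) and (iii) are precisely the paper's observations that a subsequence of a subsequence of $\{\,\alpha_n\,\}$ is again a subsequence, and that a further subsequence of an $\Omega$-sequence stays, coordinatewise, constant or strictly ascending because the thinning index is strictly increasing. The only cosmetic difference is in (i), where the paper argues directly via the bounded/unbounded dichotomy (pigeonhole yields a constant subsequence in the bounded case, an unbounded sequence yields a strictly ascending one), whereas you route through the monotone subsequence theorem and the well-ordering of $\mathbb{N}$; both give the same constant-or-strictly-ascending extraction, and your bookkeeping that $n \leqslant \sigma(n) < \sigma(n+1)$ follows from strict monotonicity is sound.
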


\begin{proof}
\begin{enumerate}
\item[(i)] If $\{\,\alpha _n \,\}$ is bounded,  it contains an infinite constant
subsequence. Otherwise it contains an infinite
strictly ascending subsequence. Then it contains an $\Omega $-subsequence.

\item[(ii)] That is to say, the subsequence of any subsequence of $\{\,\alpha _n\,\}$ is a subsequence of  $\{\,\alpha _n \,\}$.

\item[(iii)] That is to say, the subsequence $\{\,\alpha _{\sigma _2 \circ \sigma _1(n)} \,\}$ of the $\Omega $-subsequence \\
    $\{\,\alpha _{\sigma _2 (n)} \,\}_{ }$ is an $\Omega $- subsequence.
\end{enumerate}
\end{proof}

\begin{lemma}\label{lem:FMP2}
Let $\{\,\alpha _n \,\}$ be a sequence in $\mathbb{N}^k$. Then
there exists a subsequence index $\sigma $ such that $\{\,\alpha _{\sigma(n)} \,\}$ is an $\Omega $-subsequence of $\{\,\alpha _n \,\}$.
\end{lemma}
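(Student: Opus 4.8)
The plan is to prove the statement by induction on the dimension $k$, reducing at each stage to the one-dimensional case already settled in Lemma~\ref{lem:FMP1}(i) and using Lemma~\ref{lem:FMP1}(ii),(iii) to combine the successive extractions without spoiling the coordinates already brought into $\Omega$-form. The base case $k=1$ requires nothing beyond quoting Lemma~\ref{lem:FMP1}(i): a sequence in $\mathbb{N}$ admits a subsequence index producing an $\Omega$-subsequence, namely an infinite constant or strictly ascending chain.

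For the inductive step I would write each $\alpha _n =(\beta _n ,\alpha _n (k))$, where $\beta _n \in \mathbb{N}^{k-1}$ is the projection onto the first $k-1$ coordinates. Applying the induction hypothesis to $\{\,\beta _n \,\}$ yields a subsequence index $\sigma _0 $ such that $\{\,\beta _{\sigma _0 (n)} \,\}$ is an $\Omega $-subsequence, i.e.\ each of the coordinates $1,\dots ,k-1$ is constant or strictly ascending along $\sigma _0 $. I then look at the single $\mathbb{N}$-sequence $\{\,\alpha _{\sigma _0 (n)} (k)\,\}$ formed by the last coordinate and apply Lemma~\ref{lem:FMP1}(i) to extract a further subsequence index $\tau $ making $\{\,\alpha _{\sigma _0 (\tau (n))} (k)\,\}$ an $\Omega $-chain. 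Setting $\sigma =\sigma _0 \circ \tau $, Lemma~\ref{lem:FMP1}(ii) guarantees that $\sigma $ is again a subsequence index.

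It then remains to verify that all $k$ coordinates are in $\Omega $-form along $\sigma $. Coordinate $k$ is so by construction. For each $j$ with $1\leqslant j\leqslant k-1$, the sequence $\{\,\beta _{\sigma _0 (n)} (j)\,\}$ is already an $\Omega $-chain, and $\{\,\beta _{\sigma (n)} (j)\,\}$ is merely the subsequence of it selected by $\tau $; by Lemma~\ref{lem:FMP1}(iii) this subsequence of an $\Omega $-subsequence is again an $\Omega $-subsequence. Hence $\{\,\alpha _{\sigma (n)} \,\}$ is an $\Omega $-subsequence of $\{\,\alpha _n \,\}$, completing the induction.

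The one point requiring care, and the step I would single out as the crux, is exactly this last one: the extra extraction $\tau $ needed to tame the $k$-th coordinate must not destroy the structure already achieved in coordinates $1,\dots ,k-1$. This is precisely what Lemma~\ref{lem:FMP1}(iii) provides, since a subsequence of a constant or strictly ascending chain stays constant or strictly ascending. Once that lemma is in hand the argument is a routine finite iteration over the $k$ coordinates, and in particular no appeal to Dickson's or Higman's lemma is made.
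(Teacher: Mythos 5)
Your proof is correct and follows essentially the same route as the paper's: both extract subsequences coordinate by coordinate via Lemma~\ref{lem:FMP1}(i), compose the resulting indexes using Lemma~\ref{lem:FMP1}(ii), and invoke Lemma~\ref{lem:FMP1}(iii) to ensure later extractions preserve the $\Omega$-form already achieved. Your phrasing as an induction on $k$ rather than the paper's explicit $k$-fold composition $\sigma =\sigma _1 \circ \cdots \circ \sigma _k$ is only a cosmetic difference.
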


\begin{proof}
Since $\{\,\alpha _{n}(1)\,\}$  is a  sequence in $\mathbb{N}$,  then, by Lemma~\ref{lem:FMP1} (i),  there exists a subsequence index $\sigma_1$ such that $\{\,\alpha _{\sigma _1(n) }(1)\,\}$ is an $\Omega $-subsequence of $\{\,\alpha _{n}(1)\,\}$.   Thus $\{\,\alpha _{\sigma _1(n) }\,\}$ is a subsequence of $\{\,\alpha _{n}\,\}$ such that $\{\,\alpha _{\sigma _1(n) }(1)\,\}$ is an $\Omega $-subsequence of $\{\,\alpha _{n}(1)\,\}$. Note that  $\{\,\alpha _{\sigma _1(n) }\,\}$ is also a sequence in $\mathbb{N}^k$.

 Similarly,   $\{\,\alpha _{\sigma _1(n) }(2)\,\}$  is a  sequence in $\mathbb{N}$,  then, by Lemma~\ref{lem:FMP1} (i),  there exists a subsequence index $\sigma_2$ such that $\{\,\alpha _{\sigma _1(\sigma_2(n)) }(2)\,\}$ is an $\Omega $-subsequence of $\{\,\alpha _{\sigma _1(n) }(2)\,\}$.
Since $\{\,\alpha _{\sigma _1(\sigma_2(n)) }(1)\,\}$ is a subsequence of
$\{\,\alpha _{\sigma _1(n) }(1)\,\}$, then it is also an $\Omega $-subsequence of $\{\,\alpha _{n}(1)\,\}$ by Lemma~\ref{lem:FMP1} (iii).

Sequentially, we construct subsequence indexes $\sigma _1
,\sigma _2 ,\dots ,\sigma _k $ such that
\[
\{\,\alpha _{\sigma _1(n) }(1)\,\}, \quad \{\,\alpha _{\sigma _1 \circ \sigma _2(n)} (2)\,\},\quad \{\,\alpha_{\sigma _1 \circ \dots \circ \sigma _k(n) } (k)\,\}
\]
are $\Omega
$-subsequences by Lemma~\ref{lem:FMP1} (i) and (iii). Let $\sigma =\sigma _1 \circ \cdots\circ \sigma _k $ then $\{\,\alpha _{\sigma(n)} \,\}$ is an $\Omega
$-subsequence of $\{\,\alpha _n \,\}$ by Lemma~\ref{lem:FMP1}(iii).
\end{proof}

\begin{definition} Let $B =\{\,p_1 ,\dots ,p_k \,\}$,
$\mathcal{M}=\{\,\prod\limits_{i=1}^k {p_i ^{\alpha (i)}}\mid\alpha \in
\mathbb{N}^k\,\}$. $\left( {a\mapsto b} \right]=\left\{\, {c\in
\mathcal{M}\mid ac\leqslant b} \right\}$ for all $a\in
\mathcal{M},b\in B$  are as Definition 3.5.
\end{definition}

\begin{lemma}\label{lem:FMP3}
For every $p\in B $, let $M\Rightarrow
p=\{\,\left( {m\mapsto p} \right]\mid m\in \mathcal{M}\,\}$ . Then (i) $M\Rightarrow p$ is
linearly ordered under set inclusion; (ii) $M\Rightarrow p$ is finite.
\end{lemma}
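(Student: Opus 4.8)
The plan is to prove the two parts by entirely different means: part (i) is a direct consequence of linearity, while part (ii) is an $\Omega$-subsequence argument whose engine is the support-invariance supplied by Lemma~\ref{lem:p1}.

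For (i), recall that $\mathcal{M}$ is a submonoid of the chain $\mathcal{A}$, so any two $m,m'\in\mathcal{M}$ are comparable; say $m\leqslant m'$. Multiplication is order preserving, so for every $c$ one has $m'c\leqslant p\Rightarrow mc\leqslant m'c\leqslant p$, whence $\left({m'\mapsto p}\right]\subseteq\left({m\mapsto p}\right]$. Thus the assignment $m\mapsto\left({m\mapsto p}\right]$ is antitone and any two members of $M\Rightarrow p$ are $\subseteq$-comparable, which is (i).

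For (ii), suppose $M\Rightarrow p$ were infinite. By (i) it is a chain under inclusion, and an infinite chain contains an infinite strictly monotone sequence; so I may pick $m_n\in\mathcal{M}$, with exponent vectors $\alpha_n\in\mathbb{N}^k$, for which the $\left({m_n\mapsto p}\right]$ are pairwise distinct. By Lemma~\ref{lem:FMP2} I pass to an $\Omega$-subsequence and assume $\{\alpha_n\}$ is an $\Omega$-sequence; let $S$ be the set of coordinates on which it strictly ascends and put $s=\prod_{i\in S}p_i$. Writing $m_{n+1}=m_n r_n$ with $r_n=\prod_{i\in S}p_i^{\alpha_{n+1}(i)-\alpha_n(i)}$, Lemma~\ref{lem:p1} gives $r_n\leqslant e$ iff $s\leqslant e$, a condition independent of $n$; since $m_{n+1}c=(m_nc)r_n$, for each fixed $c$ the sequence $m_nc$ is monotone in $n$ with a direction that does not depend on $c$, and hence the sets $\left({m_n\mapsto p}\right]$ are nested. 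It therefore suffices to show that this nested sequence stabilises, which contradicts distinctness.

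The heart of the matter, and the step I expect to be the main obstacle, is that the comparison defining $\left({m\mapsto p}\right]$ is with the generator $p$, not with $e$, so Lemma~\ref{lem:p1} does not apply to it verbatim. The device I would use is to multiply through by $q=p\backslash e\in A$: since $pq\leqslant e$, the relation $m_nc\leqslant p$ forces $m_ncq\leqslant e$, and now Lemma~\ref{lem:p1}—which allows arbitrary elements of $A$, not only the generators—shows that $m_ncq\leqslant e$ depends only on the support of $m_nc$ together with $q$. Along the $\Omega$-sequence this support equals $S\cup\operatorname{supp}(\prod_{i\notin S}p_i^{\alpha_n(i)})\cup\operatorname{supp}(c)$, which is constant once $n$ is large, uniformly in $c$; this pins down, independently of $n$, exactly those $c$ for which $m_nc$ is forced strictly above $p$, and leaves only the ``ambiguous'' $c$, whose products lie in the interval $(p,e/q]$, as candidates for changing membership. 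To upgrade this one-sided information to genuine uniform stabilisation I would extract, by a second application of Lemmas~\ref{lem:FMP1} and~\ref{lem:FMP2}, an $\Omega$-sequence from the witnesses $c_n$ realising the strict inclusions, arrange $\{\alpha_n\}$ and the witness exponents to ascend simultaneously, and read off the resulting staircase $m_{n'}c_n\leqslant p\iff n'\leqslant n$; the cancellation law of Lemma~\ref{lem:p2}(ii) then forces the witnesses to be strictly monotone, and feeding the support-invariance above into the staircase drives its two sides into the same support class. Closing the resulting configuration—so that the stabilisation over the ambiguous $c$ is genuinely uniform—is the delicate bookkeeping I anticipate as the crux of the argument.
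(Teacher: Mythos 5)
Your part (i) is fine, and your setup for part (ii) — assume an infinite chain in $M\Rightarrow p$, take witnesses $c_n$ with $m_{n+1}c_n\leqslant p<m_nc_n$, and apply Lemma~\ref{lem:FMP2} twice to make both the exponent vectors of the $m_n$ and those of the witnesses into $\Omega$-sequences — coincides exactly with the paper's skeleton (its indices $\tau$ and $\sigma$). But the proof has a genuine gap, and you name it yourself: everything after ``the heart of the matter'' is a plan, not an argument, and it ends by admitting that ``closing the resulting configuration'' is unresolved. Moreover the device you propose for that closing step is structurally one-sided and cannot close it. Multiplying by $q=p\backslash e$ gives only the implication $m_nc\leqslant p\Rightarrow m_ncq\leqslant e$; the converse fails, since $m_ncq\leqslant e$ is equivalent to $m_nc\leqslant e/q$, and $e/q$ may lie strictly above $p$. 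So the support-invariance of Lemma~\ref{lem:p1}(ii) applied to products ending in $q$ classifies only the $c$ with $m_ncq>e$ (which are forced out of $(m_n\mapsto p]$), and the entire ``ambiguous'' band $p<m_nc\leqslant e/q$ — which is precisely where the strict inclusions live — remains uncontrolled. No amount of re-extracting $\Omega$-subsequences from the witnesses fixes this by itself, because the obstruction is not lack of monotonicity but the missing second direction of the $q$-reduction.

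The paper's resolution of exactly this difficulty is different and avoids residuating by $p$ altogether: it eliminates $p$ by pairing two instances of the witness inequalities crosswise. From $m_{\tau(\sigma(1)+1)}c_{\sigma(1)}\leqslant p<m_{\tau(\sigma(1))}c_{\sigma(1)}$ and $m_{\tau(\sigma(3)+1)}c_{\sigma(3)}\leqslant p<m_{\tau(\sigma(3))}c_{\sigma(3)}$ one gets two strict inequalities between pure monomials with $p$ gone; since the index choices satisfy $\sigma(1)<\sigma(1)+1\leqslant\sigma(2)<\sigma(3)$ and $\sigma(1)<\sigma(3)+1$, the $\Omega$-structure makes all coordinatewise exponent differences nonnegative, so cancellation (Lemma~\ref{lem:p2}(ii)) in the commutative chain yields two products of \emph{nonnegative} powers of $p_1,\dots,p_k$ lying strictly on opposite sides of $e$. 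The same $\Omega$-structure (each coordinate constant or strictly ascending) forces the two difference vectors to have \emph{identical supports} (the paper's equivalences~\eqref{eq:9} and~\eqref{eq:10}), and then Lemma~\ref{lem:p1}(ii) — whether such a product exceeds $e$ depends only on its support — puts them on the same side of $e$, a contradiction. This crosswise-cancellation step is the missing idea in your proposal; with it, neither $q=p\backslash e$ nor any uniform-stabilisation bookkeeping over the ambiguous band is needed, and the descending case is symmetric, after which (i) gives finiteness.
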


\begin{proof} (i) follows directly from the fact that these elements of $D$ are all down-sets of $A.$

(ii) Suppose that there is an infinite strictly  ascending sequence\\
$\left\{\, {\left(
{\prod\limits_{i=1}^k {p_i ^{\beta _n (i)}} \mapsto p} \right]} \right\}$ in
$M\Rightarrow p$ under set inclusion, i.e., $\beta _n \in \mathbb{N}^k$
and

$$
\left( {\prod\limits_{i=1}^k {p_i ^{\beta _n (i)}}\mapsto p}
\right]\subset \left( {\prod\limits_{i=1}^k {p_i ^{\beta _{n+1} (i)}}\mapsto p}
\right]
$$
for all $n\in \mathbb{Z}_+ $. By Lemma~\ref{lem:FMP2}, there is a subsequence
index $\tau $ such that $\{\,\beta _{\tau(n)} \,\}$ is an $\Omega
$-subsequence of $\{\,\beta _n \,\}$. Then
 $$\left( {\prod\limits_{i=1}^k {p_i
^{\beta _{\tau (n)} (i)}}\mapsto p} \right]\subset \left( {\prod\limits_{i=1}^k
{p_i ^{\beta _{\tau (n+1)} (i)}} \mapsto p} \right] $$
 for all $n$ in $\mathbb{Z}_+$, where  for each $1\leqslant i\leqslant k$,
\[
\beta _{\tau(1)} (i)=\beta _{\tau(2)} (i)=\cdots =\beta _{\tau (n)}(i)=\cdots
\]
or
\begin{equation}
\beta _{\tau(1)} (i)<\beta _{\tau (2)}(i)<\cdots <
\beta_{\tau(n)} (i)<\cdots.
\label{eq:1}
\end{equation}
Thus there is
a sequence $\{\,\alpha _n \,\}$ in $\mathbb{N}^k$ such that
$$\prod\limits_{i=1}^k {p_i ^{\alpha _n (i)}} \in \left(
{\prod\limits_{i=1}^k {p_i ^{\beta _{\tau (n+1)} (i)}} \mapsto p} \right]
 \,\,\mathrm{and}\,\,
\prod\limits_{i=1}^k {p_i ^{\alpha _n (i)}} \notin \left(
{\prod\limits_{i=1}^k {p_i ^{\beta _{\tau (n)} (i)}}\mapsto  p} \right]. $$
Thus for all $n$ in $\mathbb{Z}_+ $,
$$\prod\limits_{i=1}^k {p_i ^{\beta _{\tau (n+1)} (i)}} \prod\limits_{i=1}^k
{p_i ^{\alpha _n (i)}} \leqslant p<\prod\limits_{i=1}^k {p_i ^{\beta _{\tau
(n)} (i)}} \prod\limits_{i=1}^k {p_i ^{\alpha _n (i)}}. $$
By Lemma~\ref{lem:FMP2}, there is a subsequence index $\sigma $ such that
$\{\,\alpha _{\sigma(n)} \,\}$ is an $\Omega $-subsequence of $\{\,\alpha _n
\,\}$. Then  for each
$1\leqslant i\leqslant k$,
\[
\alpha _{\sigma(1)} (i)=\alpha _{\sigma(2)} (i)=\cdots
=\alpha _{\sigma(n)} (i)=\cdots\]
 or
 \begin{equation}
\alpha _{\sigma (1)} (i)<\alpha_{\sigma(2)} (i)<\cdots <
\alpha _{\sigma(n)} (i)<\cdots.
\label{eq:2}
\end{equation}

Then for all $n$ in $\mathbb{Z}_+ $,
\begin{equation}
\prod\limits_{i=1}^k {p_i ^{\beta _{\tau
(\sigma (n)+1)} (i)}} \prod\limits_{i=1}^k {p_i ^{\alpha _{\sigma (n)} (i)}} \leqslant
p<\prod\limits_{i=1}^k {p_i ^{\beta _{\tau (\sigma (n))} (i)}}
\prod\limits_{i=1}^k {p_i ^{\alpha _{\sigma (n)} (i)}}.
\label{eq:3}
\end{equation}

Then by letting $n=1,3$ in~\eqref{eq:3},
$$\prod\limits_{i=1}^k {p_i ^{\beta _{\tau (\sigma (1)+1)} (i)}}
\prod\limits_{i=1}^k {p_i ^{\alpha _{\sigma (1)} (i)}} \leqslant
p<\prod\limits_{i=1}^k {p_i ^{\beta _{\tau (\sigma (1))} (i)}}
\prod\limits_{i=1}^k {p_i ^{\alpha _{\sigma (1)} (i)}}, $$
$$\prod\limits_{i=1}^k {p_i ^{\beta _{\tau (\sigma (3)+1)} (i)}}
\prod\limits_{i=1}^k {p_i ^{\alpha _{\sigma (3)} (i)}} \leqslant
p<\prod\limits_{i=1}^k {p_i ^{\beta _{\tau (\sigma (3))} (i)}}
\prod\limits_{i=1}^k {p_i ^{\alpha _{\sigma (3)} (i)}} .$$
Thus
\[
\prod\limits_{i=1}^k {p_i ^{\beta _{\tau (\sigma (1)+1)} (i)}}
\prod\limits_{i=1}^k {p_i ^{\alpha _{\sigma (1)} (i)}} <\prod\limits_{i=1}^k
{p_i ^{\beta _{\tau (\sigma (3))} (i)}} \prod\limits_{i=1}^k {p_i ^{\alpha
_{\sigma (3)} (i)}}
\]
and
\begin{equation}
\prod\limits_{i=1}^k {p_i ^{\beta _{\tau (\sigma
(3)+1)} (i)}} \prod\limits_{i=1}^k {p_i ^{\alpha _{\sigma (3)} (i)}}
<\prod\limits_{i=1}^k {p_i ^{\beta _{\tau (\sigma (1))} (i)}}
\prod\limits_{i=1}^k {p_i ^{\alpha _{\sigma (1)} (i)}} .
\label{eq:4}
\end{equation}
Since $$\sigma
(1)< \sigma
(1)+1\leqslant \sigma (2)<\sigma (3) \,\,\,\mathrm{and }$$
$$\sigma (1)<\sigma (3)+1, $$ then
 by~\eqref{eq:1} and~\eqref{eq:2},  for all $1\leqslant i\leqslant k,$
\begin{equation}
\beta _{\tau (\sigma (3))} (i)-\beta _{\tau (\sigma (1)+1)}(i)\geqslant 0,
\label{eq:5}
\end{equation}
\begin{equation}
\beta _{\tau (\sigma (3)+1)} (i)-\beta _{\tau (\sigma (1))}
(i)\geqslant 0,
\label{eq:6}
\end{equation}
\begin{equation}
\alpha _{\sigma (3)} (i)-\alpha _{\sigma (1)}
(i)\geqslant 0.\label{eq:7}
\end{equation}
Hence
\[
e<\prod\limits_{i=1}^k {p_i ^{\beta _{\tau (\sigma (3))} (i)-\beta _{\tau
(\sigma (1)+1)} (i)}} \prod\limits_{i=1}^k {p_i ^{\alpha _{\sigma (3)}
(i)-\alpha _{\sigma (1)} (i)}}
\]
and
\begin{equation}
\prod\limits_{i=1}^k {p_i ^{\beta
_{\tau (\sigma (3)+1)} (i)-\beta _{\tau (\sigma (1))} (i)}}
\prod\limits_{i=1}^k {p_i ^{\alpha _{\sigma (3)} (i)-\alpha _{\sigma (1)}
(i)}} <e \label{eq:8}
\end{equation}

by~\eqref{eq:4} and Lemma~\ref{lem:p2}(ii).

On the other hand,   for all $1\leqslant i\leqslant k$,
\[
\beta _{\tau (\sigma (3))} (i)-\beta _{\tau (\sigma
(1)+1)} (i)>0 \]
iff
\begin{equation}\beta _{\tau (\sigma (3)+1)} (i)-\beta _{\tau
(\sigma (1))} (i)>0\label{eq:9}
\end{equation}
by~\eqref{eq:1}  and
$\sigma(1)<\sigma (1)+1<\sigma (3)<\sigma (3)+1.$\\
Then \[
\beta
_{\tau (\sigma (3))} (i)-\beta _{\tau (\sigma (1)+1)} (i)+\alpha _{\sigma
(3)} (i)-\alpha _{\sigma (1)} (i)>0\] iff \begin{equation}
\beta _{\tau (\sigma
(3)+1)} (i)-\beta _{\tau (\sigma (1))} (i)+\alpha _{\sigma (3)} (i)-\alpha
_{\sigma (1)} (i)>0.
\label{eq:10}
\end{equation}

The  necessity part of~\eqref{eq:10}  is proved as follows and,  the sufficiency part is by a similar procedure and omitted.\\
Let
$$\beta _{\tau (\sigma (3))} (i)-\beta _{\tau (\sigma (1)+1)} (i)+\alpha
_{\sigma (3)} (i)-\alpha _{\sigma (1)} (i)>0.$$ Then by~\eqref{eq:5} and~\eqref{eq:7},
$$\beta _{\tau (\sigma (3))} (i)-\beta _{\tau (\sigma (1)+1)} (i)>0$$ or $$\alpha
_{\sigma (3)} (i)-\alpha _{\sigma (1)} (i)>0.$$ \\
If $\beta
_{\tau (\sigma (3))} (i)-\beta _{\tau (\sigma (1)+1)} (i)>0$ then  by~\eqref{eq:9}, $$\beta
_{\tau (\sigma (3)+1)} (i)-\beta _{\tau (\sigma (1))} (i)>0$$  and
thus  by~\eqref{eq:7},  $$\beta _{\tau (\sigma (3)+1)} (i)-\beta _{\tau (\sigma (1))} (i)+\alpha
_{\sigma (3)} (i)-\alpha _{\sigma (1)} (i)>0.$$
 If $\alpha _{\sigma
(3)} (i)-\alpha _{\sigma (1)} (i)>0$ then by~\eqref{eq:6}, $$\beta _{\tau (\sigma (3)+1)}
(i)-\beta _{\tau (\sigma (1))} (i)+\alpha _{\sigma (3)} (i)-\alpha _{\sigma
(1)} (i)>0.$$

Hence $$\prod\limits_{i=1}^k {p_i ^{\beta _{\tau
(\sigma (3))} (i)-\beta _{\tau (\sigma (1)+1)} (i)}} \prod\limits_{i=1}^k
{p_i ^{\alpha _{\sigma (3)} (i)-\alpha _{\sigma (1)} (i)}} >e$$ iff $$\prod\limits_{i=1}^k {p_i ^{\beta _{\tau (\sigma (3)+1)} (i)-\beta
_{\tau (\sigma (1))} (i)}} \prod\limits_{i=1}^k {p_i ^{\alpha _{\sigma (3)}
(i)-\alpha _{\sigma (1)} (i)}} >e$$ by~\eqref{eq:10} and Lemma~\ref{lem:p1}(ii), which contradicts with~\eqref{eq:8}. Hence there is no infinite strictly ascending sequence in $M\Rightarrow p$.  Similarly, we can prove that there is no infinite strictly  descending
sequence in $M\Rightarrow p$. Thus $M\Rightarrow p$ is finite by (i).
\end{proof}

\begin{lemma}\label{lem:FMP4}
If $\mathcal{B}$ is a finite partial subalgebra
of $\mathcal{A}$ then the algebra $\mathcal{D}$ is finite.
\end{lemma}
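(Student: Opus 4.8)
The plan is to reduce the finiteness of $\mathcal{D}$ to the finiteness of the generating family $\bar{D}$, exploiting that every member of $D$ is an intersection of members of $\bar{D}$. Since $\mathcal{B}$ is finite we may write $B=\{\,p_1,\dots,p_k\,\}$, and in the present (commutative) setting $\mathcal{M}$ is the monoid $\{\,\prod_{i=1}^k p_i^{\alpha(i)}\mid \alpha\in\mathbb{N}^k\,\}$ generated by $B$, exactly as fixed in the Definition preceding Lemma~\ref{lem:FMP3}. Because $\mathcal{A}$ is commutative, each element of $\bar{D}$ has the simplified form $\left({a\mapsto b}\right]$ with $a\in\mathcal{M}$ and $b\in B$, so I would first record the decomposition
$$\bar{D}=\{\,\left({a\mapsto b}\right]\mid a\in\mathcal{M},\,b\in B\,\}=\bigcup_{b\in B}(M\Rightarrow b).$$

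Next I would invoke Lemma~\ref{lem:FMP3}(ii), which states that each $M\Rightarrow b$ is finite. Since $B$ is finite, $\bar{D}$ is a finite union of finite sets and is therefore itself finite. This is the only place where the substantive combinatorial work enters, and it has already been carried out in Lemma~\ref{lem:FMP3}; the remainder of the argument is purely set-theoretic bookkeeping.

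Finally, recalling that $D=\{\,\bigcap\chi\mid\chi\subseteq\bar{D}\,\}$, every element of $D$ is determined by the subset $\chi\subseteq\bar{D}$ from which it is obtained as an intersection, so $|D|\leqslant 2^{|\bar{D}|}$. As $\bar{D}$ is finite, $D$ is finite, whence $\mathcal{D}$ is a finite algebra. The main obstacle is thus not in this lemma at all but in Lemma~\ref{lem:FMP3}, where the specific property of $\mathbf{UL}_\omega$ and $\mathbf{IUL}_\omega$-algebras from Lemma~\ref{lem:p1} replaces the usual appeal to Dickson's or Higman's theorem; here I would only need to confirm that the commutative simplification of $\bar{D}$ genuinely captures all of $\bar{D}$ and that the four distinguished elements $\bot^D,\top^D,e^D,f^D$ arising in Lemma~\ref{lem:BCP1}(1) are already accounted for, since each equals $\left({e\mapsto b}\right]$ for the corresponding $b\in\{\,\bot,\top,e,f\,\}\subseteq B$.
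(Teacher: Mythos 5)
Your proposal is correct and takes essentially the same route as the paper, whose proof of this lemma is just the one line ``It is immediate from Lemma~\ref{lem:FMP3}.'' Your decomposition $\bar{D}=\bigcup_{b\in B}(M\Rightarrow b)$, the finiteness of each $M\Rightarrow b$ from Lemma~\ref{lem:FMP3}(ii), and the bound $|D|\leqslant 2^{|\bar{D}|}$ via $D=\{\,\bigcap\chi\mid\chi\subseteq\bar{D}\,\}$ are exactly the bookkeeping that the word ``immediate'' suppresses, with your closing checks (the commutative form of $\bar{D}$ and the constants $\bot^D,\top^D,e^D,f^D$) correctly covered by the paper's simplification of $({\rm\bf a}^{\rm\bf\delta})^{-1}(b]$ to $\left(a\mapsto b\right]$ in the ${\rm\bf UL}_\omega$/${\rm\bf IUL}_\omega$ setting.
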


\begin{proof} It is immediate from Lemma~\ref{lem:FMP3}.
\end{proof}

\begin{theorem}\label{thm:FMP1}
The varieties of $\mathbf{UL}_\omega $-algebras and $\mathbf{IUL}_\omega
$-algebras have the FEP.
\end{theorem}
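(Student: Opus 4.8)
The plan is to deduce the theorem from the machinery already assembled, first reducing the FEP for the whole variety to its subdirectly irreducible members and then feeding a suitable finite partial subalgebra into the Blok--Alten completion $\mathcal{D}$. Since the classes of $\mathbf{UL}_\omega$- and $\mathbf{IUL}_\omega$-algebras are defined by equations (the residuated-lattice axioms together with semilinearity, the identity $x\backslash e=x^2\backslash e$, commutativity, and, in the involutive case, $\neg\neg x=x$), each is a variety, so Lemma~\ref{lem:Ksi} applies: to obtain the FEP it suffices to establish it for the subdirectly irreducible members. By Theorem~\ref{thm:P1}(i) every such algebra has a subdirect representation by $\mathbf{L}$-chains, and a subdirectly irreducible algebra that embeds subdirectly into a product of chains must already embed into one of the factors (its monolith cannot lie below all the kernels of the projections, so some projection is injective). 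Hence every subdirectly irreducible $\mathbf{UL}_\omega$- (resp. $\mathbf{IUL}_\omega$-) algebra is itself a chain, and I may assume the ambient algebra $\mathcal{A}$ is linearly ordered.

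Next, given a finite partial subalgebra $\mathcal{B}$ of such a chain $\mathcal{A}$, I would enlarge $B$ — keeping it finite — so that $\{e,f,\bot,\top\}\subseteq B$ and, in the involutive case, $\neg b\in B$ for every $b\in B$. These added elements already lie in $A$, so the enlargement is still a partial subalgebra of $\mathcal{A}$, and any partial embedding of the enlarged partial subalgebra restricts to one of the original $\mathcal{B}$ (a partial operation defined on a tuple from $B$ with value in $B$ is still defined and preserved after enlargement), so nothing is lost. Now I form the completion $\mathcal{D}$ of Section~\ref{sec:bac} for the chain $\mathcal{A}$ and this enlarged $B$. By Lemma~\ref{lem:BCP3}(ii) (resp. Lemma~\ref{lem:BCP4}(iv)) $\mathcal{D}$ is an $\mathbf{UL}_\omega$- (resp. $\mathbf{IUL}_\omega$-) algebra, hence a member of the variety; by Lemma~\ref{lem:BCP5} the map $\iota:\mathcal{B}\to\mathcal{D}$, $a\mapsto(a]$, is a partial embedding; and by Lemma~\ref{lem:FMP4} the algebra $\mathcal{D}$ is finite. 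Thus $\mathcal{B}$ partially embeds into a finite member of the variety, which is precisely the FEP for the subdirectly irreducible members, and Lemma~\ref{lem:Ksi} then delivers the theorem.

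The substantive difficulty of this whole argument has in fact been isolated and dispatched in Lemma~\ref{lem:FMP4}, i.e. the finiteness of $\mathcal{D}$, which rests on Lemma~\ref{lem:FMP3} via the $\Omega$-subsequence argument of Lemmas~\ref{lem:FMP1}--\ref{lem:FMP2} and ultimately on the exponent-insensitivity $x\backslash e=x^2\backslash e$ recorded in Lemma~\ref{lem:p1}. Given that finiteness, the proof of the theorem is essentially an assembly of the preceding lemmas. The one place where I would be most careful is the reduction to chains: I must confirm that subdirect irreducibility genuinely forces linearity (so that the chain-only completion of Section~\ref{sec:bac}, in which every $\mathbf{a}^{\boldsymbol{\delta}}$ has the normal form used there, is available) and that closing $B$ under the constants and, for $\mathbf{IUL}_\omega$, under $\neg$, does not disturb either the partial-subalgebra structure or the hypotheses of Lemmas~\ref{lem:BCP3}--\ref{lem:BCP5}.
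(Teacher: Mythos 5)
Your proposal is correct and follows essentially the same route as the paper, which proves the theorem by combining Lemmas~\ref{lem:BCP3}--\ref{lem:BCP5} with Lemma~\ref{lem:FMP4}; the reduction you spell out --- via Lemma~\ref{lem:Ksi} and Theorem~\ref{thm:P1} to subdirectly irreducible members, which are chains, together with enlarging $B$ by the constants (and closing under $\neg$ in the involutive case, which preserves finiteness since $\neg\neg x=x$) --- is exactly what the paper leaves implicit, and your care about these points is well placed. Nothing in your argument deviates from or adds a gap to the paper's intended proof.
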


\begin{proof} It is immediate from Lemmas~\ref{lem:BCP3}$\sim$~\ref{lem:BCP5}, Lemma~\ref{lem:FMP4}.
\end{proof}

\begin{corollary}
The universal theories of $\mathbf{UL}_\omega $-algebras and
$\mathbf{IUL}_\omega $-algebras are decidable.
\end{corollary}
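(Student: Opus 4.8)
The plan is to derive decidability from the finite embeddability property established in Theorem~\ref{thm:FMP1}, via the standard two-sided semidecision argument. The first step is to record the key consequence of the FEP: for $\mathbf{L}\in\{\mathbf{UL}_\omega,\mathbf{IUL}_\omega\}$, a universal sentence holds in all $\mathbf{L}$-algebras if and only if it holds in all \emph{finite} $\mathbf{L}$-algebras. The forward direction is trivial. For the converse, suppose a universal sentence $\forall\bar x\,\psi(\bar x)$, with $\psi$ quantifier-free, fails in some $\mathcal{A}\in\mathbf{L}$. I would choose a witnessing tuple $\bar a$ in $A$ with $\psi(\bar a)$ false, and let $B$ consist of $e,f,\bot,\top$, the entries of $\bar a$, and the values in $\mathcal{A}$ of all subterms occurring in $\psi$ under the assignment $\bar x\mapsto\bar a$. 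Then the partial subalgebra $\mathcal{B}$ of $\mathcal{A}$ with domain $B$ is finite, every operation needed to evaluate $\psi(\bar a)$ is defined in $\mathcal{B}$ and agrees with $\mathcal{A}$, so $\psi(\bar a)$ is already false in $\mathcal{B}$. By Theorem~\ref{thm:FMP1} the partial algebra $\mathcal{B}$ embeds into some finite $\mathcal{D}\in\mathbf{L}$ by an order-preserving, order-reflecting map commuting with all defined operations. As the atomic formulas of the signature are (in)equalities between terms, the image of $\bar a$ witnesses the failure of $\forall\bar x\,\psi$ in the finite algebra $\mathcal{D}$, as required.

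The second step turns this equivalence into a decision procedure by exhibiting the universal theory as both recursively enumerable and co-recursively enumerable. For co-enumerability, I would observe that $\mathbf{UL}_\omega$ and $\mathbf{IUL}_\omega$ are finitely axiomatized varieties: an algebra of the given signature is an $\mathbf{L}$-algebra precisely when it satisfies the finitely many defining identities (the bounded residuated-lattice and monoid equations, semilinearity, commutativity and involutivity where appropriate, and (Fin)). One can therefore effectively enumerate all finite structures of the signature, retain those satisfying the axioms, and check the given universal sentence on each. If the sentence fails in $\mathbf{L}$, then by the equivalence of the first step it fails in some finite $\mathbf{L}$-algebra, so this search halts with a counterexample; thus the valid universal sentences form a co-r.e.\ set. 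For enumerability, a universal sentence holds in all $\mathbf{L}$-algebras exactly when it is a first-order consequence of the finitely many defining axioms, so by the completeness theorem one may enumerate formal derivations and halt when a proof is found; thus the valid sentences form an r.e.\ set.

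Running the two searches in parallel, exactly one terminates on any input universal sentence, which yields a decision procedure; hence the universal theories of $\mathbf{UL}_\omega$-algebras and $\mathbf{IUL}_\omega$-algebras are decidable. I expect the only delicate point to be the transfer argument in the first step: one must ensure the domain $B$ is closed under enough subterm evaluations that $\psi(\bar a)$ is genuinely decided inside the \emph{partial} algebra $\mathcal{B}$, and then invoke the fact that a partial embedding preserves the order and all defined operation values (Lemma~\ref{lem:BCP5} supplies precisely such a map) so that the truth value of every atomic subformula of $\psi$ is preserved under the embedding into $\mathcal{D}$.
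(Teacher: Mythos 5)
Your proposal is correct and is precisely the standard argument the paper leaves implicit: the corollary is stated as an immediate consequence of Theorem~\ref{thm:FMP1}, relying on the well-known fact that a finitely axiomatized variety with the FEP has a decidable universal theory (r.e.\ by proof enumeration, co-r.e.\ by searching finite counterexamples guaranteed by the FEP). Your fleshed-out transfer step---closing $B$ under the subterm values of a falsifying assignment and invoking the partial embedding of Lemma~\ref{lem:BCP5} to preserve the atomic subformulas---is exactly the intended justification.
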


\begin{theorem}  \label{thm:FMP2}
Let  $\mathbf{L}\in \{\,\mathbf{UL}, \mathbf{IUL}\,\}$ and $\mathbf{L}_{\omega}\in \{\,\mathbf{UL}_{\omega}, \mathbf{IUL}_{\omega}\,\}$.  For any formula $\varphi$ in $\mathbf{L}$, the following statements are equivalent:
\begin{enumerate}
\item[(i)] $\Gamma \vdash _{\rm {\bf L}_{\omega}} \varphi $;

\item[(ii)] $\Gamma\vDash _{\mathcal{A}} \varphi $ for every ${\rm {\bf L}_{\omega}}$-algebra $\mathcal{A}$;

\item[(iii)] $\Gamma\vDash _{\mathcal{A}} \varphi $ for every ${\rm {\bf L}_{\omega}}$-chain $\mathcal{A}$;

\item[(iv)]  $\Gamma\vDash _{\mathcal{A}} \varphi $ for every finite ${\rm {\bf L}_{\omega}}$-algebra $\mathcal{A}$;

\item[(v)]   $\Gamma\vDash _{\mathcal{A}} \varphi $ for every finite ${\rm {\bf L}}$-algebra $\mathcal{A}$.
\end{enumerate}
\end{theorem}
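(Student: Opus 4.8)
The plan is to establish the implications (i)$\Leftrightarrow$(ii), (ii)$\Leftrightarrow$(iii), (ii)$\Rightarrow$(iv)$\Rightarrow$(ii) and (iv)$\Leftrightarrow$(v), which together yield the full equivalence; throughout, $\Gamma$ is a finite set of premises, as is standard for these finitary consequence relations. The equivalence (i)$\Leftrightarrow$(ii) is the algebraic completeness of $\mathbf{L}_\omega$: since $\mathbf{L}_\omega$ arises from the algebraizable logic $\mathbf{L}$ by adjoining the axiom (FIN), whose algebraic counterpart is precisely the identity $x\backslash e=x^2\backslash e$ cutting out the subvariety of $\mathbf{L}_\omega$-algebras, the usual Lindenbaum--Tarski argument (soundness, together with completeness via the free algebra modulo interderivability) yields $\Gamma\vdash_{\mathbf{L}_\omega}\varphi$ iff $\Gamma\vDash_{\mathcal{A}}\varphi$ for all $\mathbf{L}_\omega$-algebras $\mathcal{A}$. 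For (ii)$\Leftrightarrow$(iii) I would invoke Theorem~\ref{thm:P1}(i): chains are algebras, giving (ii)$\Rightarrow$(iii) at once, while conversely a valuation $v$ in an $\mathbf{L}_\omega$-algebra $\mathcal{A}$ with $e\leqslant v(\gamma)$ for all $\gamma\in\Gamma$ and $e\not\leqslant v(\varphi)$ can be pushed through the subdirect embedding $\mathcal{A}\hookrightarrow\prod_i\mathcal{C}_i$ and projected to a coordinate where $e\not\leqslant\pi_i(v(\varphi))$, yielding a failure in a chain.

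The inclusion (ii)$\Rightarrow$(iv) is immediate, as finite $\mathbf{L}_\omega$-algebras are in particular $\mathbf{L}_\omega$-algebras. The converse (iv)$\Rightarrow$(ii) is the crux and is where the finite embeddability property is used. Arguing contrapositively, suppose $\Gamma\vDash_{\mathcal{A}}\varphi$ fails for some $\mathbf{L}_\omega$-algebra $\mathcal{A}$, witnessed by a valuation $v$ with $e\leqslant v(\gamma)$ for every $\gamma\in\Gamma$ and $e\not\leqslant v(\varphi)$. Let $B$ consist of $e,f,\bot,\top$ together with the values $v(\psi)$ for all subformulas $\psi$ of $\varphi$ and of the finitely many members of $\Gamma$; then $B$ is finite and is the domain of a finite partial subalgebra $\mathcal{B}$ of $\mathcal{A}$. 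By Theorem~\ref{thm:FMP1} the variety of $\mathbf{L}_\omega$-algebras has the FEP, so $\mathcal{B}$ partially embeds by some $\iota$ into a finite $\mathbf{L}_\omega$-algebra $\mathcal{D}$. Setting $w=\iota\circ v$ and inducting on subformulas --- using that whenever an operation applied to subformula-values remains in $B$ the partial embedding commutes with it --- gives $w(\psi)=\iota(v(\psi))$ for every relevant $\psi$. Since $\iota$ preserves and reflects the order and fixes $e$, we obtain $e^{\mathcal{D}}\leqslant w(\gamma)$ for all $\gamma\in\Gamma$ but $e^{\mathcal{D}}\not\leqslant w(\varphi)$, so the consequence fails in the finite algebra $\mathcal{D}$, contradicting (iv).

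Finally, (iv)$\Leftrightarrow$(v) rests on the fact that the finite $\mathbf{L}$-algebras and the finite $\mathbf{L}_\omega$-algebras are the very same algebras: every $\mathbf{L}_\omega$-algebra is by definition an $\mathbf{L}$-algebra, and conversely Lemma~\ref{lem:c2a} (valid for $\mathbf{UL}$ and $\mathbf{IUL}$, as remarked after its proof) shows that every finite $\mathbf{L}$-algebra satisfies (FIN) and hence is an $\mathbf{L}_\omega$-algebra. Since (iv) and (v) thus assert validity over one and the same class, they are equivalent. I expect the only real obstacle to lie inside (iv)$\Rightarrow$(ii) --- verifying that the finitely many subformula-values genuinely form a partial subalgebra and that the FEP embedding transports the witnessing valuation without distortion --- since the substantive content, namely the FEP itself, is already supplied by Theorem~\ref{thm:FMP1}.
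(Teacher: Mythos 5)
Your proposal is correct and takes essentially the same route as the paper: algebraic completeness for (i)$\Leftrightarrow$(ii), the subdirect decomposition into chains of Theorem~\ref{thm:P1} for the chain step, the FEP of Theorem~\ref{thm:FMP1} to pass from finite algebras back to the general case, and Lemma~\ref{lem:c2a} for (iv)$\Leftrightarrow$(v). The only cosmetic difference is that you run the FEP step as (iv)$\Rightarrow$(ii) directly on an arbitrary algebra while the paper phrases it as (iv)$\Rightarrow$(iii) for chains; both rest on the same theorem, and your explicit restriction to finite $\Gamma$ (and the standard partial-subalgebra transfer argument you spell out) matches what the paper leaves implicit.
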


\begin{proof}
(i) is equivalent to (ii) by a canonical procedure.
(iii) implies (ii)  by Theorem~\ref{thm:P1} (i).   Clearly, (ii) implies (iv). Then (iii) implies (iv). (iv) implies (iii) by Theorem~\ref{thm:FMP1}.  (iv) is equivalent to (v) by Lemma~\ref{lem:c2a} (ii).
\end{proof}

Theorem~\ref{thm:FMP2} shows that,  as was expected,  axiomatic systems ${\rm {\bf UL}}_\omega$ and ${\rm {\bf IUL}}_\omega $  are complete with respect to finite ${\rm {\bf UL}}$ and ${\rm{\bf IUL}}$-algebras,  respectively.
In other words,   ${\mathbf{UL}_{\omega}}$  and ${\mathbf{IUL}_{\omega}}$  are logics for  the  classes of  finite ${\mathbf{UL}}$  and ${\mathbf{IUL}}-$algebras, respectively.

 \section{Concluding remarks}\label{sec:cr}

 The suitability of  Blok and Alten's
Construction for ${\rm {\bf UL}}_\omega $, ${\rm {\bf IUL}}_\omega $-algebras
mainly depends on that elements of the monoid $M$ generated by $\{\,p_1 ,\dots, p_k \,\}$
has the form $\prod\limits_{i=1}^k {p_i ^{\alpha (i)}}$.  It seems difficult to
extend the proof of Lemma~\ref{lem:FMP3}  to ${\rm {\bf HpsUL}}_\omega^{\ast} $-algebras.

\subsection*{Acknowledgements}
 I would like to thank the anonymous reviewer for carefully reading the first version of this article and many instructive suggestions. Especially,
 the current form of the axiom (Fin) is due to the reviewer and its old form is $(xy)\backslash e=(xy^{2}) \backslash e$.

\end{document}